\newtheorem{theorem}{Theorem}
\newtheorem{proposition}[theorem]{Proposition}
\newtheorem{lemma}[theorem]{Lemma}
\newtheorem{corollary}[theorem]{Corollary}
\newtheorem{problem}[theorem]{Problem}
\newtheorem{property}{Property}
\theoremstyle{definition}
\theoremstyle{remark}
\newtheorem*{remark}{Remark}
\newtheorem*{notation}{Notation}
\newcommand{\Addresses}{{
  \bigskip
  \footnotesize

{Department of Mathematics, The University of Chicago, 5734 S. University Avenue, Chicago, IL 60637, USA. Email:
iqra@uchicago.edu}

\medskip

{Department of Mathematics, The University of Chicago, 5734 S. University Avenue, Chicago, IL 60637, USA. Email:
csornyei@math.uchicago.edu}

}}
\def\R{{\mathbb R}}
\def\RR{{\mathbb R}}
\def\NN{{\mathbb N}}
\def\al{{\alpha}}
\def\eps{{\varepsilon}}
\def\om{{\omega}}
\def\dist{{\mathrm{dist}}}
\def\cl{{\mathrm{cl}}}
\def\emp{{\emptyset}}
\def\II{{{\mathcal I}}}
\def\HH{{{\mathcal H}}}
\title[Metric embeddings into $\R^d$ without shrinking]{On the modulus of continuity of functions whose image has positive measure, and metric embeddings into $\R^d$ without shrinking}
\author{Iqra Altaf and Marianna Cs\"ornyei}
\date{}
\keywords{Sard's theorem, metric embedding, modulus of continuity, perimeter, isoperimetric inequality}
\subjclass[2020]{28A75, 30L05}
\begin{document}
\begin{abstract}
A generalization of the classical Sard theorem in the plane is the following. Let $f$ be a function defined on a subset $A\subset{\mathbb R}^2$. If $f$ has modulus of continuity $\omega(r)\lesssim r^2$, then $f(A)\subset{\mathbb R}$ has Lebesgue measure zero. Choquet claimed in \cite{Choquet} that this was a full characterization, i.e. for every $\omega$ for which $\omega(r)/r^2$ converges to $\infty$ as $r\to 0$, there is a counterexample. We disprove this by showing that the correct characterization, in $\R^d$, is $\int_{0}^{1} \omega(r)^{-1/d}=\infty$. For the precise statement see Theorem \ref{our}.

We obtain this as a special case of a more general result. We study which spaces $(X,\rho)$ can be embedded into ${\mathbb R}^d$ without decreasing any of the distances in $X$. That is, we ask the question whether there is an $f:\,X\to {\mathbb R}^d$ such that $\|f(x)-f(y)\|\ge \rho(x,y)$ for every $x,y\in X$. We study this problem for some very general distance functions $\rho$ (we do not even assume that it is a metric space, in particular, we do not assume that $\rho$ satisfies the triangle inequality), and find quantitative necessary and sufficient conditions under which such a mapping exists.

We will obtain the characterization mentioned above as a special case of our metric embedding results, by choosing $X$ to be an interval in $\R$, and defining $\rho$ by putting $\rho(x,y)=r$ if $\|x-y\|=\omega(r)$. 
\end{abstract}
\maketitle

\tableofcontents
\section{Introduction}

\subsection{Sard: a brief history}
Let $f\colon\mathbb{R}^{n} \rightarrow \mathbb{R}^m$ be a differentiable function. The critical points of $f$ are the points at which the Jacobian matrix of $f$ has rank less than $m$, and the critical values are the values of $f$ at the critical points. The classical Sard theorem (also known as the Sard lemma or Morse-Sard theorem) says that if a function $f$ is $C^k$ differentiable with $k \geq \max(n-m+1,1)$ then the set of the critical values of $f$ in $\mathbb{R}^m$ has Lebesgue measure $0$. The case $m=1$ was shown by Morse \cite{Morse}, and the general case by Sard \cite{Sard0}. Later Sard in \cite{Sard1} generalized his theorem to show that if $k\ge\max(n-m+1,1)$, $f$ is $C^k$ and $X_d$ is the set of points at which the rank of the Jacobian matrix is strictly less than $d$, then the $d$-dimensional Hausdorff measure of $f(X_d)$ is zero. 

Bates \cite{Bates} lowered the regularity assumption in the original Sard theorem by showing that if $n>m$ and $f\in C^{n-m,1}(\R^n,\R^m)$ then the critical values of $f$ has Lebesgue measure zero. It does not hold for $f\in C^{n-m,\alpha}(\R^n,\R^m)$ with $\alpha<1$. \par
Dubovitskii and Federer \cite{Dubo}, \cite{Federer} independently gave another generalization of Sard theorem as the following result. Let $n,m,k,d \in \mathbb{N}$, $d \le\min\{n,m\}-1$,  $v=d+\frac{n-d}{k}$ and $f\in C^k(\mathbb{R}^n,\mathbb{R}^{m})$. Then the $v$-dimensional Hausdorff measure of $f(X_d)$ is zero. (Putting $d=m-1$ and $k=n-m+1$ gives the classical Sard theorem.) Recently in \cite{Ferone} this has been generalized to $C^{k, \alpha}$ functions by Ferone, Korobkov and Roviello. They showed that if $n,\, m,\,k \in \mathbb{N}$, $0\leq \alpha \leq 1$, $d < m$, $f \in C^{k, \alpha}(\mathbb{R}^n, \mathbb{R}^m)$, $q \in (d, \infty)$ and $v=n-d-(k+\alpha)(q-d)$, then 
    $$\mathcal{H}^{v}(X_d \cap f^{-1}(y))=0 \text{ for } \mathcal{H}^{q}\text{-a.e. } y \in \mathbb{R}^m. $$
In this theorem with $\alpha=0$ and $q=d+\frac{n-d}{k}$ we get the above-mentioned result of Dubovitskii and Federer. And with $\alpha=1$, $k=n-m$, $q=m$ and $d=m-1$ we obtain Bates theorem for $C^{n-m,1}(\mathbb{R}^n, \mathbb{R}^m)$ functions.

Sard theorem has also been generalized in the context of Sobolov spaces.  
In \cite{Pasc}, Pascale showed that if $n<p$ and $k=n-m+1$ then Sard theorem holds for $W_{\text{loc}}^{k,p}(\mathbb{R}^n, \mathbb{R}^m)$. For many other generalizations of Sard theorem see \cite{Alberti}, \cite{Bourgain}, \cite{Bourgain1}, \cite{Ferone}, \cite{Figalli}, \cite{Norton}, \cite{Pavlica}, \cite{Putten1} and \cite{Smale}.

\subsection{Our generalization, modulus of continuity}

Let $f\in C^{1,1}(\R^2,\R)$, and let $A$ be the critical set of $f$, that is, the set of points at which the gradient of $f$ is zero. Then the restriction of $f$ onto $A$ has modulus of continuity $\om(r)\lesssim r^2$. It is an easy exercise to show that, for every $d\ge 2$ and for every $A\subset\R^d$, if a function $f\colon A\to \RR$ has modulus of continuity $\om(r)\lesssim r^d$ then $f(A)$ is null.

One of our aims in this paper is to study which functions $r^d$ can be replaced by so that the conclusion of Sard theorem remains true. We ask the following question:

\begin{problem}\label{qsard}
  For which $\omega$ can one find a set $A\subset\RR^d$ and a function $f\colon A\to \RR$ with modulus of continuity $\om$ such that $f(A)$ has positive measure?
\end{problem} 

Based on the discussion above, it is natural to consider only modulus of continuities $\om(r)$ for which $\om(r)/r^d\to\infty$ as $r\to 0$. Choquet showed (see page 52 in \cite{Choquet}) the following statement:
If $\om(r)/r^2$ converges to infinity as $r\to 0$, then there exist a set $A\subset \RR^2$ and a function $f\colon A\to\RR$ with modulus of continuity $\om$ s.t. $f(A)$ has positive measure.
The second author announced in several conferences (see e.g. \cite{Marianna}) that Choquet's argument was incorrect and, in fact, there was a counterexample for his statement: for every $A\subset\R^2$ and $f\colon A\to \R$ with modulus of continuity $\om(r)=r^2|\log r|$ the measure of $f(A)$ is zero.

In this paper we give the following characterization:

\begin{theorem}\label{our}
 Let $d\geq 2$. Assuming that $\om(r)$ is strictly increasing, convex, $\omega(0)=0$ and $\om(r)/r^d$ is monotone decreasing, the following are equivalent:
  \begin{enumerate}[(i)]
  \item For every $A\subset\RR^d$ and for every $f\colon A\to\RR$ with modulus of continuity $\om$, the measure of $f(A)$ is zero.
  \item $\int_0^{1}\om(r)^{-1/d}\,dr =\infty$.
  \end{enumerate}
\end{theorem}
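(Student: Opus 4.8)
The plan is to reduce Theorem~\ref{our} to a metric embedding statement and then prove that statement in its two directions. Set $\psi:=\om^{-1}$; the hypotheses on $\om$ say exactly that $\psi$ is strictly increasing and concave with $\psi(0)=0$ and that $s\mapsto\psi(s)/s^{1/d}$ is nondecreasing. The first observation is the equivalence: a pair $(A,f)$ as in~(i) with $|f(A)|>0$ exists if and only if there are a measurable $E\subseteq\R$ with $|E|>0$ and an injective $g\colon E\to\R^{d}$ with $\|g(u)-g(v)\|\ge\psi(|u-v|)$ for all $u,v\in E$. Indeed, since $f$ is uniformly continuous it extends continuously to $\overline A$, and intersecting with a large ball reduces us to $A$ compact and $f$ continuous; then from a bad $f$ one chooses measurably a point $g(t)\in A$ with $f(g(t))=t$ for each $t\in E:=f(A)$, obtaining an injective $g$ for which the inequality is just $|u-v|\le\om(\|g(u)-g(v)\|)$ rewritten; conversely $g$ is automatically injective (as $\psi>0$ off $0$) and $f:=g^{-1}$ has modulus of continuity $\om$ with image $E$. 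Thus for "(ii)$\Rightarrow$(i)" it suffices to rule out such a $g$ with $E$ compact and $g(E)$ bounded, and for "$\neg$(ii)$\Rightarrow\neg$(i)" it suffices to produce such a $g$ on $E=[0,1]$. Finally, substituting $s=\om(r)$ and integrating by parts (the boundary terms are finite since $\psi(s)/s^{1/d}$ is bounded), condition~(ii) becomes $\int_{0}\psi(s)\,s^{-1-1/d}\,ds=\infty$, equivalently
\[
 \sum_{n\ge1}2^{n/d}\,\psi(2^{-n})=\infty.
\]

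For "(ii)$\Rightarrow$(i)" I would argue by contradiction: let $g\colon E\to\R^{d}$ be as above with $E\subseteq[0,1]$ compact, $|E|=\eta>0$, and $g(E)$ inside a cube of side $M$. A single scale of packing is easy: for each $n$, selecting one point of $E$ in every dyadic interval of length $2^{-n}$ meeting $E$ in positive measure and keeping every second such point produces $\gtrsim\eta2^{n}$ points whose $g$-images are pairwise at distance $\ge\psi(2^{-n})$, so $\eta2^{n}\psi(2^{-n})^{d}\lesssim M^{d}$; but this only yields that $2^{n/d}\psi(2^{-n})$ is \emph{bounded}, not summable. To upgrade to summability I would run the packing at all scales simultaneously through the increasing sets $F_{t,\rho}:=\{x\in\R^{d}:\dist(x,g(E\cap[0,t]))\le\rho\}$. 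Since $\dist(g(E\cap[0,s]),g(E\cap(t,\infty)))\ge\psi(t-s)$ for $s<t$, one may couple an increase of the "time" $t$ by a dyadic step to an increase of $\rho$ by the matching $\psi$-step and keep the family nested; the relative isoperimetric inequality in $\R^{d}$ then bounds below the $\R^{d}$-volume newly occupied at each coupled step in terms of the length of $E$ newly absorbed. Summing these increments telescopes $|g(E)|\le M^{d}$ against a constant times $\eta\sum_{n}2^{n/d}\psi(2^{-n})$, forcing the series to converge and contradicting~(ii).

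For "$\neg$(ii)$\Rightarrow\neg$(i)" I would build $A$ and $f$ by an iterated-grid Cantor construction. Stage $n$ is a family of $m_{n}$ pairwise-separated closed cubes $Q^{n}_{1},\dots,Q^{n}_{m_{n}}$ of side $\beta_{n}$, with the children of each $Q^{n-1}_{j}$ forming a block of consecutive indices; $f$ sends the $i$-th parameter subinterval (length $1/m_{n}$) into $Q^{n}_{i}$, so $f(A)=[0,1]$. Inside a stage-$(n-1)$ cube the children are arranged in a $d$-dimensional sub-grid and numbered so that two of them at order-distance $j$ are at spatial distance $\gtrsim j^{1/d}\times(\text{grid spacing})$; this forces one \emph{not} to use a space-filling (Hilbert-type) numbering — for which cells far apart in order may be spatially adjacent — but instead to keep only a sufficiently spread-out subset of the sub-grid cells, with margins, so that every initial segment of the numbering has diameter $\asymp(\text{its cardinality})^{1/d}$. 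With $k_{n}:=m_{n}/m_{n-1}$ and spacing $\approx\beta_{n-1}/k_{n}^{1/d}$, the requirement $\|g(u)-g(v)\|\ge\psi(|u-v|)$ (worst at the coarsest relevant time-scale, using monotonicity of $\psi(s)/s^{1/d}$) drives the choice of $\beta_{n}$ and $m_{n}$, and the recursion closes with $\sum_{n}\beta_{n}<\infty$ — so the cubes nest down to points, $g$ is well defined and injective, $A=g([0,1])$, $f=g^{-1}$, $|f(A)|=1$ — exactly when $\sum_{n}2^{n/d}\psi(2^{-n})<\infty$.

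The step I expect to be the main obstacle, in both directions, is the passage from \emph{bounded} to \emph{summable}: one scale of packing only shows $2^{n/d}\psi(2^{-n})$ is bounded, and symmetrically no single-scale layout is ever obstructed, so all the content lies in the interaction of consecutive scales. In "(ii)$\Rightarrow$(i)" the delicate point is getting the coupling of the time-resolution to the spatial resolution right so that the isoperimetric volume increments add up exactly to the series in~(ii). In "$\neg$(ii)$\Rightarrow\neg$(i)" the delicate point is that the numbering inside a cube cannot be done with a scale-independent separation constant — an Assouad-type obstruction, since at the borderline $\om(r)=r^{d}$ one would be bi-Lipschitz embedding the $1/d$-snowflake of an interval into $\R^{d}$ — so one must spread into all $d$ dimensions rather than along a line and control the unavoidable loss over all stages, and that loss is precisely what the hypothesis on $\om$ calibrates. (It is also why $d\ge2$ is required: for $d=1$ the statement is false, e.g.\ $\om(r)=r$, $f=\mathrm{id}$, $A=[0,1]$.)
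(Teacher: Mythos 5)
Your reduction to the embedding problem and the translation of (ii) into $\sum_n 2^{n/d}\psi(2^{-n})=\infty$ agree with the paper, and your construction for $\neg$(ii)$\Rightarrow\neg$(i) is in the same spirit as the paper's Section 5 (the paper splits the scales modulo $d$ and takes products of coordinate-axis Cantor sets, which yields your ``spread-out numbering'' for free, whereas you leave the existence of a numbering of the sub-grid with $\|\mathrm{pos}(i)-\mathrm{pos}(i+j)\|\gtrsim j^{1/d}\cdot(\text{spacing})$, together with the margin bookkeeping across generations, as an unproved combinatorial step). The genuine gap is in (ii)$\Rightarrow$(i), exactly at the point you yourself flag as the main obstacle. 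The sweep $F_{t,\rho}$ with a time/radius coupling does not deliver the claimed telescoping: when the time parameter advances by a dyadic step $2^{-n}$, the new image points are $\psi$-separated only from the remote past $g(E\cap[0,t-2^{-n}])$, not from the immediately preceding piece of $E$, so the ball of radius $\sim\psi(2^{-n})$ around a newly absorbed point may lie entirely inside the already-occupied neighborhood of the recent past, and no volume gain at the matching scale is guaranteed at that step. The increments you attach to different scales are therefore not disjoint, and nothing in the outline controls their overlap; ``summing these increments'' does not give $\eta\sum_n 2^{n/d}\psi(2^{-n})\lesssim M^d$, it only reproves the single-scale bound that $2^{n/d}\psi(2^{-n})$ is bounded.

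What is actually needed, and what the paper supplies, is a multi-radius accounting together with a quantitative way to subtract fine-scale contributions and a non-consecutive choice of the two scales being compared. Concretely, the paper attaches to each time interval $I$ the set $C_I=\bigcup_{J\subset I}B(A_J,c\rho_{J,I})$, whose radius varies with the time-distance of $J$ to the complement of $I$ (this is where convexity of $\omega$, through the estimate $\omega(r+\alpha R)-\omega(r)\lesssim\alpha\,\omega(R)$, is used); it thickens $I$ in the time direction to produce annuli $D_{I,j}$ with the bounded-overlap properties (P1), (P4), (P5), (P7); it replaces $|S|^{(d-1)/d}$ by a minimal-perimeter functional $P(S)$ because the bare isoperimetric quantity does not satisfy the stability estimate $|B(S_1,r)\setminus(S_1\cup S_2)|\ge r\,(P(S_1)-P(S_2))$ that makes the subtraction of the finer-scale sets legitimate; and the resulting inequality compares scales $N<M$ chosen by an auxiliary selection argument so that the subtracted term, of size $\sim K_{N,M}^{-1}2^{q(M-N)/d}T_M$, is dominated by the main term $T_N$ — this is precisely where $d\ge2$ enters, via summability of $2^{(m-n)/d}k_{n,m}^{-1}$ with $k_{n,m}\sim2^{m-n}$. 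None of these mechanisms, nor substitutes for them, appear in your sketch, so the central implication (ii)$\Rightarrow$(i) — the passage from boundedness to summability — remains unproved.
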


\bigskip
We will obtain Theorem \ref{our} as a special case of some general metric embedding results that we discuss in the next sections. The precise statements of these embedding results are quite technical (see Section \ref{mainresults}).

In Section \ref{bgrowth} we discuss the metric properties that we need to state and prove our general results. We will focus in Section \ref{bgrowth} on two interesting special cases: the first one is the Sard example (i.e. Theorem \ref{our}). The other special case, which we call 'Cantor example' can be thought of as a sort of discretized version of the continuous Sard problem.

The Cantor case is easier than the Sard case. For instance, in the Cantor case many of the general constants used in our proof will be 1 (or 0); and while in the Sard case we need to estimate how certain sets overlap, in the Cantor case they will be disjoint. We hope that seeing how our proof works in the simpler Cantor example will also help the reader to understand some of the more delicate details of the Sard (and of the general) proof.

Section \ref{mainproof} and Section \ref{construction} contain the proof of our main result. In Section \ref{mainproof} we prove the non-trivial direction; in Section \ref{construction} we present a very easy construction showing the sharpness of our results. This construction has a very similar flavour to the one that Choquet was using in his paper -- we show that, under the correct assumptions, it indeed gives a function whose image has positive measure.

\subsection{Embeddings into $\R^d$.}
We study the following question. Suppose that we are given a set $X$, and a $\rho\colon X\times X\to \RR^+$. (We do not assume that $\rho$ is a metric, in particular, it does not need to satisfy the triangle inequality.)

\begin{problem}\label{qembed}
  Does there exist a mapping $F\colon X\to \RR^d$ s.t. $F(X)$ is bounded, and
\begin{equation}\label{F}
\rho(x,y)\le|F(x)-F(y)|
\end{equation}
for every $x,y\in X?$
\end{problem}

In Problem \ref{qsard} without loss of generality we can assume that the set $A$ is bounded. We can also assume that $f(A)$ is an interval, since every set of positive measure in $\R$ can be mapped onto an interval by a contraction. Therefore Problem \ref{qsard} becomes a special case of Problem \ref{qembed} by choosing $X$ to be an interval, and \begin{equation}\label{rhomega}\rho(x,y):=r\quad\mathrm{if}\quad |x-y|=\om(r).\end{equation}

Another interesting special case to keep in mind is the following. Suppose that $X$ is a Cantor set, and we consider a ‘distance function' on the Cantor set given by a sequence $r_n\searrow 0$ as follows: $\rho(x,y)=r_n$ if $x$ and $y$ were separated in the $n^{th}$ step of the standard Cantor set construction. For which sequence $\{r_n\}$ can this Cantor set be embedded into $\R^d$ without decreasing any of the distances, i.e. s.t. \eqref{F} holds?

Another way of thinking about the Cantor set example is the following. As usual, we identify the $2^n$ subsets in the Cantor set construction by the $2^n$ sequences of 0's and 1's of length $n$, which we denote by $\II_n$. We are looking for mappings s.t. $|F(x)-F(y)|$ is at least $r_n$ if $x$ and $y$ don't belong to the same $I\in\II_n$.

More generally, let $\mathcal{I}$ denote the set of all finite sequences of $0$'s and $1$'s. For every space $X$, we can:

\begin{enumerate}
\item Choose a ‘distance function' $\rho(I,J)$ for every $I,J\in\II$.
\item For every $I\in\II$, choose a set $X_I\subset X$.
\item Ask the question whether there is an $F\colon X\to\R^n$ s.t. $F(X)$ is bounded, and 
  $$\dist(F(X_I),F(X_J))\ge \rho(I,J)$$ for every $I,J\in\II$.
\end{enumerate}
In the Sard problem, when $X$ is an interval, we can assume this interval contains $(0,1)$, and then choose $X_I$ to be the open dyadic subinterval of $(0,1)$ encoded by the sequence $I$.

Also in the general case,  it is convenient to keep the notation coming form the dyadic intervals idea, i.e. we denote $J\subset I$ if the sequence $J$ is a continuation of the sequence $I$, and $I\cap J=\emp$ if they are not continuations of each other (for each $I,J$, either $I\subset J$ or $J\subset I$ or $I\cap J=\emp$). Using this notation, we also assume:

\begin{enumerate}
\item[4.] $X_I\subset X_J$ if $I\subset J$.
\item[5.] $X_I\cap X_J=\emp$ if $I\cap J=\emp$. (We do not assume $\rho(X_I,X_J)>0$.)
\end{enumerate}

We can now forget about the underlying space $X$ and study $\rho$ by itself.
We assume that we are given a ‘distance function' $\rho\colon\II\times\II\to\R$ s.t. for every $I,J\in\II$:
\begin{itemize}
\item $\rho(I,J)=0$ if $I\subset J$ or $J\subset I$, and $\rho(I,J)\ge 0$ for every $I,J$;
\item $\rho(I,J)=\rho(J,I)$;
\item $\rho(I,K)\le\rho(J,K)$ if $J\subset I$. 
\end{itemize}

Our main question is the following:
\begin{problem}\label{main}
  Can one find for every $I\in\II$ a bounded set $A_I\subset \RR^d$ s.t.
  \begin{enumerate}[(i)]
  \item $A_I\subset A_J$ if $I\subset J$;
  \item $A_I\cap A_J=\emp$ if $I\cap J=\emp$;
  \item $\dist(A_I,A_J)\ge \rho(I,J)$ if $I\cap J=\emp$?
  \end{enumerate}
\end{problem}
\begin{remark}
  If for every $x,y\in X$, $$\rho(x,y)=\sup\{\rho(I,J)\colon x\in X_I,y\in X_J\},$$ then of course by choosing for each $x\in X$ a point $$F(x)\in\bigcap_{x\in X_I}\rm{cl}\,(A_I),$$
  \eqref{F} will hold.
\end{remark}

We cannot answer this question in its full generality, but we will try to answer it under some 'bounded growth' assumptions, which we will discuss in the next section. These assumptions will be satisfied in the Sard problem (i.e. when $\rho$ is defined by \eqref{rhomega}, with a modulus of continuity whose properties satisfy the assumptions in Theorem \ref{our}). And our bounded growth assumptions will also be automatically satisfied in the Cantor set example discussed above, when $\rho$ is given by a decreasing sequence $\{r_n\}$. \par
 
\bigskip
\begin{notation} For any set $A \subset \mathbb{R}^d$, we denote by $|A|$ the Lebesgue measure and by ${\mathcal {H}}^s(A)$ the $s$-dimensional Hausdorff measure of $A$, respectively. We denote by $B(A,r)$ the open $r$-neighborhood of $A$. We use the standard notation $a \lesssim b$ for the existence of an absolute contant $c$ such that $a \le cb$, and $a\sim b$ stands for the existence of an absolute contant $c$ such that both $a \le cb$ and $b \le ca$ hold. 
\end{notation}

\section{Bounded growth conditions}\label{bgrowth}

\subsection{Properties of $\om$}
Recall that we assumed in Theorem \ref{our} that $\omega\colon [0,\infty)\to [0,\infty)$ is a continuous, strictly increasing, convex function such that $\om(0)=0$ and $\om(r)/r^d$ is decreasing. In this section we state some important corollaries that we will use in the coming sections to show that our 'bounded growth assumptions' are automatically satisfied in the Sard example. The first part of Lemma \ref{Omm} says that $\omega$ is doubling. The heart of the matter in our proof in the Sard case will be part (ii) of Lemma \ref{Omm} which we will refer to as the 'Lipschitz' condition. We will explain its geometric meaning in Section \ref{dij}.

\begin{lemma}\label{Omm} Let $\omega\colon[0,\infty)\to [0,\infty)$ be a continuous, strictly increasing, convex function such that $\om(0)=0$ and $\om(r)/r^d$ is decreasing. Then:
\begin{enumerate}
\item[(i)] $\om(2r)\lesssim \om(r)$ for every $r>0$.
\item[(ii)] $\omega(r+\alpha R)-\omega(r) \lesssim \alpha \omega(R)$ for every $R\geq r>0$ and $\alpha \in [0,1]$.
\end{enumerate}
\end{lemma}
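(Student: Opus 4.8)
The plan is to derive both parts directly from convexity together with the hypothesis that $\om(r)/r^d$ is decreasing, with no real machinery needed. For part (i): since $\om(r)/r^d$ is non-increasing, evaluating it at $2r$ and at $r$ gives $\om(2r)/(2r)^d\le \om(r)/r^d$, i.e. $\om(2r)\le 2^d\om(r)$. So (i) holds with implied constant $2^d$, and this is literally the doubling property; there is nothing to overcome here.

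For part (ii) I would first reduce to the endpoint case $r=R$. The key is the elementary fact that for a convex function the increment over an interval of fixed length is non-decreasing as the interval moves to the right: for fixed $h>0$, the map $r\mapsto \om(r+h)-\om(r)$ is non-decreasing (this is just monotonicity of the slopes of $\om$). Applying this with $h=\al R$ and using the hypothesis $r\le R$ gives
\[
\om(r+\al R)-\om(r)\ \le\ \om(R+\al R)-\om(R)=\om((1+\al)R)-\om(R),
\]
so it suffices to bound the right-hand side by $\lesssim \al\,\om(R)$.

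To do this I would use convexity a second time, writing $(1+\al)R$ as the convex combination $(1-\al)R+\al\cdot 2R$ of $R$ and $2R$ (legitimate since $\al\in[0,1]$). Convexity yields $\om((1+\al)R)\le (1-\al)\om(R)+\al\,\om(2R)$, hence
\[
\om((1+\al)R)-\om(R)\ \le\ \al\bigl(\om(2R)-\om(R)\bigr)\ \le\ \al\,\om(2R),
\]
and then part (i) turns the last expression into $\lesssim \al\,\om(R)$, finishing the proof with implied constant $2^d$. The case $\al=0$ is trivial, and continuity/monotonicity of $\om$ takes care of any boundary issues.

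I do not expect a genuine obstacle in this lemma; the only point requiring a little care is to invoke the right consequence of convexity in the reduction step — monotonicity of \emph{equal-length} increments of $\om$, rather than the more familiar monotonicity of difference quotients based at $0$ — and to notice that it is precisely the assumption $r\le R$ (and not $r\ge R$) that makes this reduction go in the useful direction. One could alternatively skip part (i) in the second step and bound $\om((1+\al)R)$ using the $\om(r)/r^d$ hypothesis directly, via $\om((1+\al)R)\le (1+\al)^d\om(R)$ and $(1+\al)^d-1\le d\,2^{d-1}\al$ for $\al\in[0,1]$, but routing through (i) keeps the constants cleaner and emphasizes that only doubling together with convexity is used.
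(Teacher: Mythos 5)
Your proof is correct. Part (i) and the reduction step in part (ii) (monotonicity of equal-length increments of a convex function, giving $\om(r+\al R)-\om(r)\le\om((1+\al)R)-\om(R)$) coincide exactly with the paper's argument. Where you diverge is in bounding $\om((1+\al)R)-\om(R)$: the paper writes $\om(r)=r^du(r)$ with $u$ decreasing and derives the mean-value-type inequality $\om(R)-\om(r)\le d(R-r)\om(R)/R$, which it then applies on the interval $[R,(1+\al)R]$; you instead apply convexity a second time, writing $(1+\al)R=(1-\al)R+\al\cdot 2R$ to get $\om((1+\al)R)-\om(R)\le\al(\om(2R)-\om(R))\le\al\om(2R)\lesssim\al\om(R)$. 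Your route is slightly more economical: after the doubling property is established, the rest of (ii) uses only convexity, whereas the paper invokes the full $\om(r)/r^d$-monotonicity a second time. Both yield the same order of constant ($2^d$ up to a factor of $d$), and your parenthetical alternative via $(1+\al)^d-1\le d\,2^{d-1}\al$ is also valid.
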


\begin{proof}
Using the notation $\om(r)=r^du(r)$, the function $u(r)$ is decreasing. Therefore 
  $$\om(2r)= 2^dr^d u(2r) \leq 2^d r^du(r)=2^d\om(r),$$ so indeed (i) holds. For every $R\geq r>0$, we have 
 \begin{align}\label{Rr}
      \om(R)-\om(r)&=R^du(R)-r^du(r)=(R^d-r^d)u(R)+r^d(u(R)-u(r))\nonumber\\
                   &\le(R^d-r^d)u(R)\le d(R-r)R^{d-1}u(R)=d(R-r)\om(R)/R.
 \end{align}
 And by convexity, we have 
 \begin{equation}\label{convex}
   \omega(r+\alpha R)-\omega(r) \leq \omega(R+\alpha R)-\omega(R).
   \end{equation}
By replacing $R$ by $R+\al R$ and replacing $r$ by $R$ in \eqref{Rr}, we get
\begin{equation}\label{Ralpha}
    \omega(R+\alpha R)-\omega(R) \leq d\alpha R \omega(R+\alpha R)/(R+\alpha R)\lesssim \alpha \omega(2R).
\end{equation}
Combining \eqref{convex}, \eqref{Ralpha} and (i) gives (ii).
\end{proof}

\subsection{The sequence $\{r_n\}$ and the sets $C_I$}\label{seci}

Now consider the general case. For each $J\subset I$ denote 

$$\rho_{J,I}:=\inf\{\rho(J,K)\colon K\cap I=\emp\}$$

and $$\rho_I:=\sup_{J\subset I}\rho_{J,I}.$$

In the Cantor set example, if $I\in\II_n$ then everything in $I$ has distance (at least) $r_n$ from the complement of $I$, so in this case $\rho_{J,I}=\rho_I=r_n$ for every $J\subset I$. In the Sard example (identifying the sequence $I$ with the corresponding dyadic interval $I\subset(0,1)$), $\rho_{J,I}=r$ is defined by $\om(r)=\dist(J,I^c)$, and therefore $\rho_I=r$ is given by $\om(r)=1/2^{n+1}$. In both examples, $\rho_I$ depends only on the length of the sequence $I$. We will also assume in the general case that $r_n:=\rho_I$ depends only on $n$, for $I\in\II_n$.

Now we are ready to state our first bounded growth condition:
  There are some absolute constants s.t. for each $I\in\II_n$ and for each $m\ge n$, and for all except at most constant many $J\in \II_m$ outside $I$,
    $$\rho(I,J)\gtrsim r_m.$$

 Heuristically, this captures the fact that our dyadic intervals have a linear structure, they are not, say, dyadic squares. More precisely, their boundary is '0 dimensional': in every `annulus' of width $\sim r_m$ there are only constant many intervals of size $\sim r_m$. (In the Sard example, this means we are looking at the real valued, not the vector valued case.) 

 This 0 dimensional boundary condition is obviously true when our intervals are indeed intervals, and $\omega$ is doubling (see property (i) in Lemma \ref{Omm}), i.e. our bounded growth assumption holds trivially in the Sard case. And it also obviously holds in the Cantor set example, with no exceptional $J$'s, because of our special distance function $\rho$ we chose in that case.

 Why are we happy about this? It has a geometric meaning. Remember that we are looking for embeddings into $\R^d$, satisfying the conditions in Problem \ref{main}. If such a mapping exists, denote 
 \begin{equation} \label{Eq1}
     C_I:=\bigcup_{J\subset I}B(A_J,c\rho_{I,J})\subset\R^d
 \end{equation} with some sufficiently small constant $c$, given by the bounded growth assumption. By the definition of $\rho_{J,I}$, if $c\le 1/2$ then these sets are pairwise disjoint. But thanks to the bounded growth assumption, we can say more: since for every interval $I\in\II_n$, for every  $m\ge n$,  and for all but constantly many $I'\in\II_m$, and for any $J\subset I$, $J'\subset I'$ there holds $\rho(J,J')\gtrsim\max(\rho_{J,I},\rho_{J',I'},r_m)$, therefore for these intervals and for $c$ sufficiently small enough $\rho(J,J')\geq c(\rho_{J,I}+\rho_{J',I'}+2r_m)$. Hence:
 \begin{property}\label{CI} For each $I\in\II_n$ and for each $m\ge n$, there are at most constant many $I'\in \II_m$ s.t. $B(C_I,cr_m)\cap B(C_{I'},cr_m)\neq\emp$.
 \end{property}

 In our main result we will characterize for which $\rho$ can such sets $C_I$ with small overlaps of their neighborhoods exist in $\R^d$. We will not actually use how the sets $C_I$ were defined, and we will forget what $\rho$ was, we will only assume that the sets $C_I$ have some nice properties implied by $\rho$. One of the key assumptions we will use is that Property \ref{CI} holds.

 Another key property we will use (which follows immediately from the definition of our $C_I$ above) is that the sets $C_I$ are not too small:

 \begin{property}\label{rn}
   For each $I\in\II_n$, $C_I$ contains a ball of radius $\sim r_n$.
 \end{property}
 \subsection{The sets $E_{I,j}$ and their annuli $D_{I,j}$}\label{dij}

Consider first the Sard problem. Recall our ‘Lipschitz' condition (ii) in Lemma \ref{Omm}: 
\begin{equation}\label{Lipschitz}
  \om(r+\al R)-\om(r)\lesssim \al \om(R)\end{equation}
for every $r\le R$ and $\al\in [0,1]$.


This again has a very nice geometric corollary, which we now explain. In the Cantor set example we won't have an analogue of the Lipschitz condition \eqref{Lipschitz}, but the same geometric corollary will hold also in that case. As in the previous section, we will later care only about this corollary and not about how our sets were defined.

In the Sard case it is natural to consider every interval $I\subset[0,1]$, not only dyadic intervals. For arbitrary intervals $J\subset I$ we can define $\rho_{J,I}$ to be the $\rho$ distance of $J$ from the complement of $I$, and then define $C_I$ by \eqref{Eq1}. In \eqref{Eq1}, $I$ is now not necessarily a dyadic interval, but the subintervals $J$ are dyadic (we haven't defined the sets $A_J$ for non-dyadic intervals).

By exactly the same argument as in the previous section, if $I\cap I'=\emp$ then $C_I\cap C_I'=\emp$, and moreover, using the same argument we will show that the following generalization of Property \ref{CI} hold: if $I$ is arbitrary, $J\in\II_m$, $|I|\ge |J|$, and the $\rho$-distance between $I$ and $I'$ is at least $r_m$, then
\begin{equation}\label{uj}
B(C_I,cr_m)\cap B(C_J,cr_m)=\emp.\end{equation}
In order to show this  we need to check that for every $J\subset I$ and $J'\subset I'$, the distance of $A_J$, $A_J'$ is at least $c\rho_{J,I}+c\rho_{J',I'}+2cr_m$, which we indeed have since $\rho(J,J')\ge\max(\rho_{J,I},\rho_{J',I'},r_m)$.

So far nothing happened, everything is exactly the same as in the previous section. But $\eqref{Lipschitz}$ allows us to say something more delicate: for every $J\subset I\subset I'$, the Lipschitz condition $\eqref{Lipschitz}$ tells us that there is an absolute constant $L$ s.t. $\rho_{J,I'}-\rho_{J,I}\ge \alpha R$ if $\om(\rho_{J,I'})-\om(\rho_{J,I})\ge L\alpha \om(R)$ and $\rho(J,I) \leq R$. In particular, if the Euclidean distance of $I$ from the complement of $I'$ is $\ge L\al\om(R)$ and $\rho(J,I) \leq R$ then $\rho_{J,I'}-\rho_{J,I}\ge \alpha R$, for all $J \subset I$.

For any $d>0$, let $I_d$ denote the Euclidean $d$-neighbourhood of $I$. Then with $I'=I_d$, $d=L\al\om(R)$, the above discussion tells us that $\rho_{J,I}+\al R\le \rho_{J,I'}$, i.e.,

\begin{equation}\label{La}B(C_I,\alpha R)\subset C_{I_{L\al \om(R)}}\end{equation} for every $\alpha\le 1$, provided that $\rho_{J,I}\le R$ for every $J\subset I$.

Now let $I\in\II_n$, $m\ge n$, and for each $j\ge 0$ denote
$$E_{I,j}:=C_{I_{j2^{-m}}},\quad D_{I,j}=B(E_{I,j},cr_m)\setminus E_{I,j}.$$
(Slightly abusing the notation, we put $I_0=I$, i.e. $E_{I,0}=C_I$. Also, the sets $E_{I,j}$, $D_{I,j}$ depend not only on $I$ and $j$ but also on $m$. We will always fix an $m$ before using them therefore we hope that the ambiguity of our notation will not cause any confusion.)  For future reference, note that

\begin{property}\label{3} $C_I \subset E_{I,j}$ for every $I\in\II_n$ and for every $j$.
\end{property}

From \eqref{La}, with $R\sim r_n$, $\om(R)\sim 2^{-n}$, and $\al R= cr_m$, i.e. $\al=cr_m/R$ we get 

\begin{equation}\label{ujjj}
  D_{I,j}\subset B(E_{I,j},cr_m)=B(C_{I_{j2^{-m}}},cr_m)\subset C_{I_{j2^{-m}+L\al\om(R)}}\subset C_{I_{j'2^{-m}}},
\end{equation}if $$j'-j\ge L\al\om(R)2^m\sim 2^{m-n}r_m/r_n.$$ In order for this calculation to be valid we need $\al\le 1$ (which we have if $c$ is small enough), but we also need that for every $J\subset I_{j2^{-m}}$ there holds $\rho_{J,I_{j2^{-m}}}\le R$. So (using the doubling property of $\omega$) it holds for every $0\le j< k_{m,n}$ with some $$k_{n,m}\sim 2^{m-n}.$$

Note that since $\omega$ is convex, therefore $\om(r_m)/r_m\le \om(r_n)/r_n$, and since $\om(r_m)\sim 2^{-m}$, $\om(r_n)\sim 2^{-n}$, therefore
$$k_{n,m}r_m/r_n\sim 2^{m-n}r_m/r_n\gtrsim 1.$$
We will show the following two properties: \\

\begin{property}\label{4}
  There is an absolute constant s.t. for each $J \in \mathcal{I}_{m}$ there are only constant many pairs $(I,j)$ with $I\in\II_n$, $j< k_{n,m}$ s.t. $D_{I,j}\cap C_{J}\neq \emp$.
\end{property}

\begin{property}\label{5} For every $m>n$, $\sum_{I\in\II_n,j<k_{n,m}}\chi_{D_{I,j}(x)}\lesssim k_{n,m}r_m/r_n$ for every $x$.
\end{property}

Property \ref{5} follows from the observations that:
\begin{itemize}
\item[(i)] For $j'-j\gtrsim 2^{m-n}r_m/r_n$, we have $C_{I_{j2^{-m}}}\cap D_{I,j}=\emp$ and $D_{I,j}\subset C_{{I}_{j'2^{-m}}}$. Therefore $D_{I,j}\cap D_{I,j'}=\emp$ and for each $I$ and $x \in \mathbb{R}^d$, $\sum_{j <k_{n,m}}\chi_{D_{I,j}(x)}\lesssim k_{n,m}r_m/r_n$.
\item[(ii)] For every $I,j,I',j'$, we have $D_{I,j}\subset C_{I_{2^{-n}}}$ and $D_{I',j'}\subset C_{{I'}_{2^{-n}}}$, therefore if $\dist(I,I')> 2/2^{-n}$, then $D_{I,j}\cap D_{I',j'}=\emp$. And for each fixed $I$ there are only constant many $I'$ with $\dist(I,I')\le 2/2^{-n}$.
\end{itemize}

In order to prove Property \ref{4}, we use \eqref{uj}. If $D_{I,j}\cap C_J\neq\emp$, then the distance of $E_{I,j}=C_{I_{j2^{-m}}}$ and $J$ is at most $2^{-m}$. Clearly, there are only constantly many such $I$, and for each $I$ there is only one $j$ for which $D_{I,j}\cap C_J\neq\emp$. 

\bigskip
Now consider the Cantor case. In that example we don't have any non-dyadic intervals, and we don't have any analogue of the Lipschitz condition $\eqref{Lipschitz}$. We cannot repeat the same construction, we can instead find sets $E_{I,j}$ and their annuli $D_{I,j}$ satisfying Properties \ref{3}-\ref{5} using a much more trivial construcion. We will show that in the Cantor case we can choose $k_{n,m}$ to be as large as we like.

Fix $n\le m$ and a $k_{n,m}\gtrsim r_n/r_m$. Recall that in the Cantor case we have $C_I=\bigcup_{J\subset I}B(A_J,cr_n)$ for $I\in\II_n$, with some sufficiently small constant $c$. We assume $c<1/6$, and for every $j<k_{n,m}$ define $$E_{I,j}:=B(C_I,cjr_{n}/k_{n,m}),\quad D_{I,j}=B(E_{I,j}, cr_m)\setminus E_{I,j}.$$

Since $D_{I,j}\subset B(E_{I,j}, cr_m)\subset B(C_I,2cr_n)\subset B(A_I,3cr_n)\subset B(A_I,r_n/2)$, therefore $D_{I,j}$ cannot intersect any $C_J$, so Property 4 holds trivially. Moreover each $D_{I,j}$ intersects $\lesssim k_{n,m}r_{m}/r_n$ many other annuli, so indeed Property 5 holds.

\begin{remark}
Heuristically, the fact that we can choose $k_{n,m}$ to be arbitrarily large says that the Cantor set exhibits properties that a 0-dimensional set has. The fact that for the interval we have $k_{n,m}\sim 2^{m-n}$ corresponds to the fact that the interval is 1-dimensional. In fact, our proof will only use that $k_{n,m}$ increases faster than $2^{(m-n)/d}$; heuristically this says that (that part of) our proof works as long as our set is less than $d$-dimensional.
\end{remark}

\begin{remark} The statement of Theorem \ref{our} is obviously false with $d=1$. We use $d>1$ when we say that $2^{m-n}$ increases faster than $2^{(m-n)/d}$.  
\end{remark}

\subsection{Boundary}\label{section5}

In this section we will describe two more properties, which are very easy to check both in the Sard and in the Cantor case. They will again state a sort of ‘0-dimensional boundary' type condition.

We will now consider three scales, $m>n>k$. We fix a large enough absolute constant $q$, and assume that $n\ge k+q$, $m\ge n+q$. We fix $k$, and also fix an interval $I\in\II_k$. Our claim is that for every $n\ge k+q$ there is an $\II_n'\subset\{I'\in \II_n\colon I'\subset I\}$, s.t. $\II_n'\neq \emp$, and the following hold.

\begin{remark} We want to think of this as intervals $I'$ in $I$ not too close to the complement of $I$. In the Cantor set example, nothing in $I$ is close to the complement of $I$, and therefore $\II_n'$ will be the set of all intervals $I'\in\II_n$ contained in $I$. And we can take $q=1$. In the Sard example, we will need to throw away intervals close to each of the endpoints. The statement will be true with $q=3$.
\end{remark}

The properties we will show: for every $n\ge k+q$ and for every $m\ge n+q$, the sets $D_{J,j}$ in the previous section satisfy the following.

\begin{property}\label{6} $D_{I',j}\subset C_I$ for every $I'\in \II_{n}'$, $j<k_{n,m}$.
\end{property}

\begin{property}\label{7} If $D_{I',j}\cap C_{I''}\neq\emp$ for some $I'\in \II_{n}'$, $j<k_{n,m}$ and $I''\in\II_m$, then $I''\in\II_m'$.
\end{property}

In the Cantor case this is obvious. In the Sard case, we divide $I$ into $2^{n-k}$ subintervals and throw away the first two and the last two. With $q\ge 3$ some intervals will be left, i.e. $\II_n'\neq \emp$.

We have already seen in the last section that $D_{I',j}\subset C_{I_{2^{-n}}}$, therefore $D_{I',j}\subset C_I$ for all except possibly the first and the last $I'$. And we have also seen that if $D_{I',j}\cap C_{I''}\neq \emp$, then $\dist(I',I'')\le 2^{-n}$. So $I''$ is not contained in the first or last $I'$, and then of course it cannot be the one of the first or last two $I''$.
  
\section{Main result}\label{mainresults}

\subsection{Main geometric result in $\R^d$} 
For the convenience of the reader, first we re-list Properties 1-7, with $cr_n$ replaced by $r_n$ everywhere. The absolute constant in Property 4 we denote by $C$. Using this new notation, the bounded growth conditions are telling us that:

\begin{itemize}
\item For every $n$ and $I\in\II_n$ there is a set $C_I\subset\R^d$ s.t.

\begin{enumerate}
\item[(P1)] For each $m\ge n$, there are at most constant many $I'\in \II_m$ s.t. $B(C_I,r_m)\cap B(C_{I'},r_m)\neq\emp$.
\item[(P2)] $C_I$ contains a ball of radius $\sim r_n$.
\end{enumerate}
\item For every $m\ge n$ and $j\ge 0$ there are sets $E_{I,j}$ and their annuli $D_{I,j}=B(E_{I,j},r_m)\setminus E_{I,j}$, and $k_{n,m}\gtrsim r_n/r_m$, 
  satisfying the following: 
\begin{enumerate}
\item[(P3)] $C_I \subset E_{I,j}$ for every $I\in\II_n$ and for every $j$.
\item[(P4)] For each $m\ge n$ and $J \in \mathcal{I}_{m}$ there are at most $C$ many pairs $(I,j)$ with $I\in\II_n$, $j< k_{n,m}$ s.t. $D_{I,j}\cap C_{J}\neq \emp$.
\item[(P5)] For every $m>n$, $\sum_{I\in\II_n,j<k_{n,m}}\chi_{D_{I,j}(x)}\lesssim k_{n,m}r_m/r_n$ for every $x$.
\end{enumerate}
\item If $q\in\NN$ is large enough, then for every $n\ge k+q, m\ge n+q$ and $I\in\II_k$ there is an $\II_n'\subset\{I'\in \II_n\colon I'\subset I\}$, s.t. $\II_n'\neq \emp$, and the following hold:
  \begin{enumerate}
\item[(P6)] $D_{I',j}\subset C_I$ for every $I'\in \II_{n}'$, $j<k_{n,m}$.
\item[(P7)] If $D_{I',j}\cap C_{I''}\neq\emp$ for some $I'\in \II_{n}'$, $j<k_{n,m}$ and $I''\in\II_m$, then $I''\in\II_m'$.
\end{enumerate}
\end{itemize}

Proposition \ref{propo} below is a characterization for the existence of these sets. Note that it is a purely geometric statement about sets in $\R^d$; it has nothing to do with any modulus of continuity $\omega$ or distance function $\rho$. We do not assume in Proposition \ref{propo} that the sets $C_I, E_{I,j}$ were defined via any embedding of any space into $\R^d$.

For our characterization we need $k_{n,m}\gg 2^{(m-n)/d}$, more precisely, the assumptions we use are:
\begin{equation}\label{k1}
  \sup_n\sum_{m\ge n}2^{(m-n)/d}k_{n,m}^{-1}<\infty
  \end{equation}
  and
  \begin{equation}\label{k2}
    2^{(m-n)/d}k_{n,m}^{-1}<(4C)^{-1}\text{ if } m-n \text{ is large enough.}
    \end{equation}
    \begin{proposition}\label{propo} Let $C,q\in\NN$, let $\{r_n\}$ be a decreasing sequence converging to 0, and suppose that for every  $m\ge n$ we are given a $k_{n,m} \gtrsim r_n/r_m$ satisfying \eqref{k1} and \eqref{k2}. Then the following are equivalent:
      \begin{itemize}
      \item[(I)] There exist some sets for which (P1)-(P7) hold.
        \item[(II)] $\sum_n 2^{n/d}r_n <\infty$. 
\end{itemize}
\end{proposition}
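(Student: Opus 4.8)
The plan is to prove the two implications separately, with the implication (I)$\Rightarrow$(II) being a volume-counting argument and (II)$\Rightarrow$(I) being an explicit construction.

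For (I)$\Rightarrow$(II), I would fix $n$ and estimate $\sum_{m\ge n}|C_{I}|$-type quantities from below. The key idea is that if $\sum_n 2^{n/d}r_n=\infty$, then by (P2) the balls guaranteed inside the $C_I$, $I\in\II_n$, have total volume roughly $2^n r_n^d$, and one wants to show these cannot all fit inside a bounded region of $\R^d$ once we sum over scales, because the nesting (P1)-type disjointness is too weak to accommodate them. More precisely, I would run the argument at a fixed $I\in\II_k$: using (P6) and (P7), the annuli $D_{I',j}$ for $I'\in\II_n'$, $j<k_{n,m}$, all sit inside $C_I$ and only meet the ``good'' children $C_{I''}$, $I''\in\II_m'$. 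Estimating $|C_I|$ from above by a fixed constant (it is bounded) and from below by summing the contributions of the annuli, using (P4) to control multiplicity when we pass from the $D_{I',j}$ to the $C_{I''}$ they cover, and (P5) to control the self-overlap of the annuli, one gets an inequality of the form $\sum_{I'\in\II_n'} r_n^{d} \cdot (\text{something}) \lesssim |C_I|$. Iterating this across a geometric sequence of scales $k<n_1<n_2<\cdots$ chosen so that $m-n$ is always large enough for \eqref{k2}, the factor $2^{(m-n)/d}k_{n,m}^{-1}<(4C)^{-1}$ makes the multiplicity losses summable, so the total lower bound is $\gtrsim\sum 2^{n_i/d}r_{n_i}\cdot r_k^{d-?}$; divergence of $\sum_n 2^{n/d}r_n$ then contradicts boundedness of $|C_I|$. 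The main obstacle here is bookkeeping the various absolute constants so that the multiplicity factor $2^{(m-n)/d}k_{n,m}^{-1}$ genuinely beats the $C$ from (P4); this is exactly why both \eqref{k1} and \eqref{k2} are assumed, the first to make a single telescoping sum converge and the second to guarantee a definite gain at each step.

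For (II)$\Rightarrow$(I), assuming $\sum_n 2^{n/d}r_n<\infty$, I would build the sets $C_I$ explicitly. Since $2^n$ is the number of intervals in $\II_n$ and we need each $C_I$ to contain a ball of radius $\sim r_n$, the natural model is a self-similar ``dyadic cube'' packing: place the $C_I$ as (slightly fattened) nested families of cubes, where at step $n$ we have $2^n$ cubes of side $\sim r_n$, arranged so that the cube for $I\in\II_n$ contains the two cubes for its children in $\II_{n+1}$, with a gap of order $r_{n+1}$ around them. The condition $\sum_n 2^{n/d}r_n<\infty$ is precisely what allows all $2^n$ such cubes to be packed, at every scale simultaneously, inside a bounded region: the ``diameter cost'' of splitting one cube into $2^n$ cubes of side $r_n$ is $\sim 2^{n/d}r_n$ per refinement step (arranging $2^n$ cubes of side $r_n$ needs ambient side $\sim 2^{n/d}r_n$), and summing these is finite. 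Then I would define $E_{I,j}$ and $D_{I,j}$ by the recipe from Section \ref{dij} (the interval/Cantor constructions there already show how $E_{I,j}=C_{I_{j2^{-m}}}$ or $E_{I,j}=B(C_I,cjr_n/k_{n,m})$ give the annuli), check (P3)--(P7) using the geometric separation built into the packing, and verify (P1) from the $r_m$-separation of distinct cubes at scale $m$ together with the doubling of $\{r_n\}$ implicit in $k_{n,m}\gtrsim r_n/r_m$. The delicate point is to make the packing at scale $n+1$ compatible with the one at scale $n$ for \emph{all} $n$ at once while keeping the total bounded — i.e. a single recursive construction rather than scale-by-scale — and to arrange that the annuli $D_{I,j}$ genuinely stay inside the parent $C_I$ (Property \ref{6}) and only touch children of good intervals (Property \ref{7}), which forces us to leave a definite ``safety margin'' near the boundary of each cube, absorbed into the constants.

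I expect the first implication, (I)$\Rightarrow$(II), to be the main obstacle: the construction in (II)$\Rightarrow$(I) is, as the introduction promises, ``a very easy construction,'' whereas the necessity direction is where Properties \ref{4}--\ref{7} are really used and where the precise interplay between $k_{n,m}$, the constant $C$, and the scaling exponent $1/d$ has to be tracked carefully through an iteration over infinitely many scales.
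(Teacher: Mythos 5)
Your sketch of (II)$\Rightarrow$(I) is essentially the paper's route (a product-of-Cantor-sets, i.e.\ nested-boxes, construction with separation $\sim r_k$ at generation $k$, for which (P1)--(P7) hold with arbitrary $C$, $q$, $k_{n,m}$), so that half is fine. The gap is in (I)$\Rightarrow$(II), and it is not a matter of bookkeeping constants: the approach as you frame it --- ``the balls of radius $\sim r_n$ cannot all fit in a bounded region once we sum over scales'' --- would fail, because nothing in (P1)--(P7) prevents the scale-$m$ sets from sitting inside the scale-$n$ sets (in the Sard and Cantor examples they do exactly that), so volumes at different scales occupy the same region and do not add. A single-scale volume count gives only $2^{n}r_n^d\lesssim 1$, i.e.\ $2^{n/d}r_n\lesssim 1$, which is far weaker than $\sum_n 2^{n/d}r_n<\infty$. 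What does add across scales is the measure of the leftover layers $C_I\setminus\bigcup_{J\in\II_{m}}C_J$, and the quantitative engine needed to bound these from below is isoperimetric, not volumetric: the paper introduces the minimal perimeter $P(S)$, proves $|B(S,r)\setminus S|\ge rP(S)$ and, crucially, the subtraction inequality $|B(S_1,r)\setminus(S_1\cup S_2)|\ge rP(S_1)-rP(S_2)$ (Lemma \ref{peri}, iterated in \eqref{key}). This is what allows one to discount the fine-scale sets $C_J$ that invade an annulus $D_{I',j}$ by subtracting their \emph{perimeters} rather than their volumes --- a volume subtraction gives nothing, since those $C_J$ can fill the annulus --- and the paper notes explicitly that \eqref{key} fails for the naive substitutes $|S|^{(d-1)/d}$ or $P_0$. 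Your inequality ``$\sum_{I'}r_n^{d}\cdot(\text{something})\lesssim|C_I|$'' and the undetermined exponent in ``$r_k^{d-?}$'' are exactly the places where this missing machinery would have to enter.

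The second missing idea is the choice of scales. You propose a fixed geometric sequence of scales with $m-n$ large, with \eqref{k2} making the ``multiplicity losses summable''; but the lower bound one actually obtains has the form $T_N-C(K_{N,M}r_M/r_N)^{-1}T_M$ with $T_N:=R_N\sum_{J\in\II'_{qN}}P(C_J)$ (see \eqref{3*}), and at a pre-chosen $M$ the error term involving $T_M$ may well dominate $T_N$. The paper handles this by proving the telescoping estimate of Lemma \ref{1} in the quantity $Q_nP_n^{1/(d-1)}$ (with the degenerate case $P_{n+1}\ll P_n$ treated separately) and then choosing $N<M$ adaptively, by a stopping-time argument on the auxiliary sequences $a_m$, $L_{N,M}$, $s_{n_k}$, $T_{n_k}$ in Section \ref{sub}; conditions \eqref{k1} and \eqref{k2} enter there, to give $\sum_M L_{N,M}\lesssim 1$ and $L_{N,M}\le 1/2$, not merely to sum multiplicities along a predetermined sequence of scales. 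Without the perimeter subtraction inequality and without this adaptive selection of $N$ and $M$, your outline does not close.
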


\begin{remark}
  Note that (II) holds for a sequence $\{r_n\}$ if and only if it holds for $\{cr_n\}$ with a constant $c$. Also all other properties listed in this section remain valid when we replace each set $S\subset \R^d$ by $cS$ and we replace $r_n$ by $cr_n$. There wasn't any mathematical reason for replacing $cr_n$ by $r_n$ at the beginning of this section -- we did this only to simplify our formulas and notation (in this and in all subsequent sections).
\end{remark}

\begin{remark} We do not actually need the implication (II)$\implies$(I) in Proposition \ref{propo} for Theorem \ref{theorem} and Theorem \ref{chara}. We will get this for free, without any additional work, in Section \ref{construction}. Since it is an interesting statement, we included this for the sake of completeness.
\end{remark}

\subsection{Our answer to Problem \ref{main}}

We verified in Section \ref{bgrowth} that in the Sard and in the Cantor case part (I) of Proposition \ref{propo} holds with some $C,q\in\NN$. In the Sard case we had $\omega(r_n)=1/2^{n+1}$ (using our old definition of $r_n$), and $k_{n,m}\sim 2^{n-m}$. Thus we have \eqref{k1} and \eqref{k2} iff $d \geq 2$. In the Cantor case $\{r_n\}$ was an arbitrary sequence decreasing to 0, and we could choose $k_{n,m}$ to be arbitrary large.

It is an easy exercise to check that with the choice $\omega(r_n)=1/2^{n+1}$, (II) of Proposition \ref{propo} is equivalent to $\int_0^1\om(r)^{-1/d}\,dr<\infty$. Since (I) implies (II) in Proposition \ref{propo}, therefore indeed (ii) implies (i) in Theorem \ref{our}. Similarly, in the Cantor case, the implication (I)$\implies$(II) in Proposition \ref{propo} shows that if we have a positive answer to Problem \ref{main} then (II) holds.

In the general case, we say that a distance function $\rho$ has \emph{bounded growth}, if the following is true: if the answer to Problem \ref{main} is positive, i.e. the sets $A_I$ exist, then (I) of Proposition \ref{propo} holds (with the sequence $\{r_n\}$ defined by $\rho$ in Section \ref{seci} and with some appropriately chosen $C,q,k_{n,m}$ satisfying the assumptions in Proposition \ref{propo}). We know that in the Sard and in the Cantor example we always have bounded growth.

Now let $\rho$ be an arbitrary distance function and let $\{r_n\}$ be the sequence defined in Section \ref{seci}. Then it follows from (I)$\implies$(II) in Proposition \ref{propo} that:

\begin{theorem}\label{theorem} Suppose that the distance function $\rho$ has bounded growth. Then a positive answer to Problem \ref{main} implies $\sum_n 2^{n/d}r_n<\infty$.
\end{theorem}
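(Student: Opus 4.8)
The statement is an immediate consequence of the definition of \emph{bounded growth} together with Proposition \ref{propo}, so the ``proof'' is essentially an exercise in unwinding notation; let me describe how the pieces fit and where the genuine work actually sits.

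First I would fix a distance function $\rho$ with bounded growth and assume Problem \ref{main} has a positive answer, i.e. that there exist bounded sets $A_I\subset\R^d$ satisfying (i)--(iii) of Problem \ref{main}. Let $\{r_n\}$ be the sequence attached to $\rho$ in Section \ref{seci}, $r_n=\rho_I$ for $I\in\II_n$. I would first record that $\{r_n\}$ is nonincreasing: this is forced by the monotonicity axiom $\rho(I,K)\le\rho(J,K)$ for $J\subset I$, which gives $\rho_{J,I'}\le\rho_{J,I}$ whenever $I'\subset I$ and hence $\rho_{I'}\le\rho_I$. One also needs $r_n\to 0$; this is either folded into the standing hypotheses or follows from the boundedness of the nested family $\{A_I\}$, since a bounded subset of $\R^d$ cannot carry arbitrarily many sets that remain a fixed distance apart at every scale. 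Thus $\{r_n\}$ satisfies the hypotheses imposed on the sequence in Proposition \ref{propo}.

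Next I would invoke the bounded growth hypothesis directly: by its very definition, the existence of the sets $A_I$ forces the existence of sets $C_I$, $E_{I,j}$, annuli $D_{I,j}=B(E_{I,j},r_m)\setminus E_{I,j}$, constants $C,q\in\NN$, and values $k_{n,m}\gtrsim r_n/r_m$ satisfying \eqref{k1}, \eqref{k2} and (P1)--(P7). This is precisely assertion (I) of Proposition \ref{propo}. Applying the implication (I)$\implies$(II) of that proposition yields $\sum_n 2^{n/d}r_n<\infty$, which is the claim.

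So Theorem \ref{theorem} itself presents no real obstacle: all the analytic content has been deliberately pushed into Proposition \ref{propo} and into the verification (carried out in Section \ref{bgrowth}, via Lemma \ref{Omm} and the constructions of the $C_I$, $E_{I,j}$, $D_{I,j}$) that the Sard and Cantor distance functions do have bounded growth. The genuine difficulty, deferred to Section \ref{mainproof}, is the implication (I)$\implies$(II): there one must show that the balls of radius $\sim r_n$ contained in each $C_I$ (Property (P2)) cannot all be packed into a bounded region of $\R^d$ unless $\sum_n 2^{n/d}r_n$ converges, the bookkeeping being governed by the small-overlap Properties (P1), (P4), (P5), the boundary Properties (P6), (P7), and the hypothesis $k_{n,m}\gg 2^{(m-n)/d}$ — which is exactly the ``less than $d$-dimensional'' condition flagged in the remarks of Section \ref{dij}.
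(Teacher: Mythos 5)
Your proposal is correct and is exactly the paper's own argument: Theorem \ref{theorem} is, by the very definition of bounded growth, an immediate corollary of the implication (I)$\implies$(II) in Proposition \ref{propo}, with all the analytic work deferred to the proof of that implication in Section \ref{mainproof}. Your additional remarks that $\{r_n\}$ is nonincreasing (from the monotonicity axiom for $\rho$) and tends to $0$ are a reasonable, if routine, check that the hypotheses of Proposition \ref{propo} are met.
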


For the reverse implication, we need to introduce one more notation. Let $r_n^*$ denote the supremum of $\rho(J_1,J_2)$, where the supremum is taken over all sequences $I_1, J_2\in\II$ whose length is at least $n$ and whose first $n-1$ terms agree and the $n^{th}$ term differs. That is, $n$ is the first index for which they are contained in two different intervals $I_1,I_2\in\II_n$. (In the Sard case $\omega(r_n^*)=1/2^{n-1}$ and in the Cantor case $r_n^*=r_n$. So in both cases $r_n^*\sim r_n$.)

For the following theorem we don't need to assume that $\rho$ has bounded growth. Our construction in Section \ref{construction} will show that:

\begin{theorem}\label{chara} If  $\sum_n 2^{n/d}r_n^*<\infty$, then there is a positive answer to Problem 4.
\end{theorem}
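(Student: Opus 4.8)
The plan is to prove Theorem \ref{chara} by an explicit Choquet-type construction of the sets $A_I$, laid out in $\R^d$ so that all $d$ coordinate directions are genuinely used. The first step I would take is to reduce requirement (iii) of Problem \ref{main} to a statement about the level at which two sequences separate. For disjoint $I,J\in\II$, let $n=n(I,J)\ge 1$ be that level, i.e.\ the longest common prefix of $I$ and $J$ has length $n-1$ and their $n$-th terms differ; then $(I,J)$ is one of the pairs occurring in the definition of $r_n^*$, so $\rho(I,J)\le r_n^*$. Hence it suffices to construct bounded, nested sets $A_I$ with $\dist(A_I,A_J)\ge r_n^*$ whenever $I,J$ separate at level $n$. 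I would moreover replace $r_n^*$ throughout by $g_n:=r_n^*+2^{-2n}>0$: this only strengthens what must be shown, still satisfies $\sum_n 2^{n/d}g_n<\infty$, and forces disjoint $A_I$, $A_J$ to land at a positive distance (so that (ii) is automatic) even if some $\rho$-values or some $r_n^*$ vanish.

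For the construction I would index the coordinate directions of $\R^d$ by $\{1,\dots,d\}$ and assign to each level $n\ge1$ the direction $c(n):=1+\bigl((n-1)\bmod d\bigr)$, so the directions are cycled through. Set $A_\emp:=[0,L]^d$ with $L:=1+\sum_{m\ge 1}2^{m/d}g_m<\infty$. Then, inductively on the length, given the closed box $A_K$ for $K\in\II_{n-1}$, cut $A_K$ by a slab of width $g_n$ orthogonal to direction $c(n)$ into two congruent closed sub-boxes $A_{K0}$ and $A_{K1}$ (where $K0$, $K1$ denote $K$ followed by $0$, resp.\ $1$); this is legitimate as long as the side length of $A_K$ in direction $c(n)$ exceeds $g_n$. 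This defines $A_I$ for every $I\in\II$, with $A_I\subset A_J$ whenever $I\subset J$, which is (i).

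Next I would check that the construction never gets stuck and that the boxes shrink. The side length of $A_I$ in a direction $i$ changes only at the levels $m\in\{i,i+d,i+2d,\dots\}$, where it is replaced by half of its current value minus $g_m/2$; solving this recursion, after $j$ such cuts the side in direction $i$ equals $2^{-j}L-\sum_{l=0}^{j-1}2^{-(j-l)}g_{i+ld}$, and the next cut is legitimate exactly when this exceeds $g_{i+jd}$, i.e.\ when $L>\sum_{l=0}^{j}2^{l}g_{i+ld}$. Since $2^l\le 2^{(i+ld)/d}$, the right-hand side is at most $\sum_m 2^{m/d}g_m<L$, so every cut is legitimate. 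Because each side length is more than halved every $d$ levels, all $d$ side lengths of $A_I$ tend to $0$ as $|I|\to\infty$; in particular $\operatorname{diam}(A_I)\to 0$, which is more than Problem \ref{main} requires, and via the Remark after Problem \ref{main} it also yields the corresponding non-contracting embedding into $\R^d$. Boundedness is immediate from $A_I\subset A_\emp$.

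Finally I would verify the separation. If $I,J$ are disjoint and separate at level $n$, let $K\in\II_{n-1}$ be their common prefix; then $I$ extends $K0$ and $J$ extends $K1$ (or vice versa, possibly with equality when $|I|=n$ or $|J|=n$), so by nesting $\dist(A_I,A_J)\ge\dist(A_{K0},A_{K1})=g_n>r_n^*\ge\rho(I,J)\ge 0$, which gives (iii) and, since $g_n>0$, also $A_I\cap A_J=\emp$, i.e.\ (ii). The difficulty here is not any individual estimate but the design of the construction: rotating the cutting direction through all $d$ coordinates amortizes the ``cost'' $g_n$ of separating at level $n$ over blocks of $d$ levels, which turns the relevant geometric series into one of ratio $2^{1/d}$ rather than $2$; this is exactly why $\sum_n 2^{n/d}r_n^*<\infty$ is the right hypothesis (cutting always along one fixed axis would instead force the far stronger $\sum_n 2^{n}r_n^*<\infty$). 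Everything else is routine bookkeeping, the only delicate points being that the side-length recursion stays positive and the harmless passage from $r_n^*$ to $g_n$.
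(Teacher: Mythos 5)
Your proposal is correct and is essentially the paper's own construction: recursively bisecting a box by slabs whose cutting direction cycles through the $d$ coordinates is exactly the paper's product of $d$ Cantor sets (one per axis, with gap lengths taken from the subsequence of $r_n^*$ along indices $\equiv m \pmod d$), just described iteratively, and the convergence check $\sum_l 2^l g_{i+ld}\le\sum_m 2^{m/d}g_m<L$ matches the paper's summability argument. Your replacement of $r_n^*$ by $g_n=r_n^*+2^{-2n}$ is a harmless (and slightly tidier) tweak ensuring strictly positive gaps, but it does not change the approach.
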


A corollary of this is that:

\begin{corollary} If $\rho$ has bounded growth, and $r_n\sim r_n^*$, then the answer to Problem 4 is positive if and only if $\sum_n 2^{n/d}r_n<\infty$.
\end{corollary}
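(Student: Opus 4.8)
The plan is to deduce the equivalence directly from the two preceding theorems, using only the elementary fact that the relation $r_n\sim r_n^*$ transfers convergence of one weighted series to the other.

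For the forward direction, suppose the answer to Problem \ref{main} is positive. Since $\rho$ is assumed to have bounded growth, Theorem \ref{theorem} applies verbatim and yields $\sum_n 2^{n/d}r_n<\infty$. Note that this implication does not use the hypothesis $r_n\sim r_n^*$ at all.

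For the reverse direction, suppose $\sum_n 2^{n/d}r_n<\infty$. The assumption $r_n\sim r_n^*$ gives absolute constants $0<c_1\le c_2$ with $c_1 r_n^*\le r_n\le c_2 r_n^*$ for every $n$, hence $\sum_n 2^{n/d}r_n^*\le c_1^{-1}\sum_n 2^{n/d}r_n<\infty$. Now Theorem \ref{chara}, which requires no bounded-growth hypothesis, produces the sets $A_I$ satisfying (i)--(iii) of Problem \ref{main}, i.e. a positive answer. Combining the two implications gives the stated equivalence.

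There is essentially no obstacle to overcome here: all of the work is contained in Proposition \ref{propo} and in the two passages from it to Theorem \ref{theorem} and Theorem \ref{chara}. The only point requiring any attention is bookkeeping, namely that Theorem \ref{theorem} is phrased with the sequence $\{r_n\}$ while Theorem \ref{chara} is phrased with $\{r_n^*\}$, so the hypothesis $r_n\sim r_n^*$ is precisely what allows the two one-sided statements to be glued into a single characterization. Equivalently, one may observe that under $r_n\sim r_n^*$ the three conditions ``$\sum_n 2^{n/d}r_n<\infty$'', ``$\sum_n 2^{n/d}r_n^*<\infty$'', and ``Problem \ref{main} has a positive answer'' are tied together by Theorem \ref{theorem} (third $\Rightarrow$ first, via bounded growth) and Theorem \ref{chara} (second $\Rightarrow$ third), while the first two are equivalent outright.
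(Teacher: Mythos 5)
Your proposal is correct and matches the intended derivation: the paper states this corollary as an immediate consequence of Theorem \ref{theorem} (forward direction, using bounded growth) and Theorem \ref{chara} (reverse direction, after converting $\sum_n 2^{n/d}r_n<\infty$ into $\sum_n 2^{n/d}r_n^*<\infty$ via $r_n\sim r_n^*$), exactly as you argue. The bookkeeping remark about which theorem is phrased in terms of $r_n$ versus $r_n^*$ is precisely the point of the hypothesis $r_n\sim r_n^*$, so nothing further is needed.
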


In particular, this implies Theorem \ref{our}. 

\bigskip
In Section \ref{mainproof} we prove the implication (I)$\implies$(II) in Proposition \ref{propo}, and hence also Theorem \ref{theorem}. In Section \ref{construction} we will prove Theorem \ref{chara} and also the reverse implication in Proposition \ref{propo}.


\section{Proof of (I)$\implies$(II) in Proposition \ref{propo}}\label{mainproof}

\subsection{Beginning of the proof}

Suppose that (I) of Proposition \ref{propo} holds and (II) fails. Our aim is to find a contradiction.

Let $C$ be the absolute constant in (P\ref{4}). Choose $q\in\NN$ large enough so that (P\ref{6}) and (P\ref{7}) hold for $q$ and moreover \eqref{k2} holds if $m-n\ge q$.


If we split the divergent sum $\sum 2^{n/d}r_n$ into $q$ sums, at least one of them will be divergent. Without loss of generality we assume
\begin{equation}
  \label{*}
  \sum_n2^{qn/d}r_{qn}=\infty.
\end{equation}

\bigskip
Let $p_n^{d/(d-1)}$ denote the minimal measure among the $|C_I|$, where $I\in \II_n$.

\begin{remark} The motivation for our strange notation is that later in some sense $p_n$ will play the role of perimeter.
\end{remark}

To simplify our notation, denote
$$Q_n:=2^{qn/d}, \, R_n:=r_{qn},\, P_n:=p_{qn},\, K_{n,m}:=k_{qn,qm},\, S_{n,m}=\sum_{k=n+1}^m Q_kR_k.$$

By (P\ref{rn}), we have $R_n\lesssim P_n^{1/(d-1)}$ for every $n$. We will contradict \eqref{*}, i.e. contradict $\sum Q_nR_n=\infty$ by showing that:

\begin{lemma}\label{1}
  For every $n$ there is $m>n$ s.t. 
  \begin{equation}\label{11}
    S_{n,m}\lesssim Q_nP_n^{1/(d-1)}-Q_mP_m^{1/(d-1)}.
  \end{equation}
\end{lemma}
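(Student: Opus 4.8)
The plan is to run an isoperimetric/perimeter argument on the sets $C_I$ at scales $qn$ through $qm$, using the annuli $D_{I,j}$ to "transport" the lower bound on the total measure at scale $qm$ back down to scale $qn$, while keeping track of how much volume is lost in the annuli. Fix $n$ and an interval $I_0\in\II_k$ with $k=qn$; restrict attention throughout to the descendants of $I_0$ that survive the boundary trimming, i.e. work with $\II'_{qj}$ for $j>n$ as in (P6)--(P7), so that all the annuli we use stay inside $C_{I_0}$ and all the smaller cubes they meet are again "good" cubes. The key quantity to estimate is, for consecutive good scales $qj$ and $q(j+1)$, how the minimal-measure parameter $P_j$ relates to $P_{j+1}$: heuristically $|C_I|$ for $I\in\II_{qj}$ is at least the sum of the measures of the $C_{I'}$ over the $\sim 2^{q}$ children $I'$, plus the measure swept out by the annuli $D_{I,j}$, minus overlap errors controlled by (P4), (P5), (P1). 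Summing a telescoping inequality of the form $Q_jP_j^{1/(d-1)} - Q_{j+1}P_{j+1}^{1/(d-1)} \gtrsim Q_jR_j$ (minus small errors) from $j=n$ to $j=m-1$ will give \eqref{11}, once we choose $m$ large enough that the error terms, which are governed by $\sum_{m\ge n}2^{(m-n)/d}k_{n,m}^{-1}$ via \eqref{k1}--\eqref{k2}, are absorbed.

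Concretely, the steps I would carry out, in order: (1) Fix $n$, and for each $j\ge n$ let $I_j\in\II'_{qj}$ realize the minimum $|C_{I_j}| = P_j^{d/(d-1)}$ among good cubes; record $P_j^{1/(d-1)} \gtrsim R_j$ from (P2). (2) For a fixed pair of scales, expand a chosen $C_I$ at scale $qj$ as the disjoint-ish union of its good children's $C_{I'}$ together with the family of annuli $D_{I,i}$, $i<K_{j,m}$, which all lie in $C_I$ by (P6); here (P5) bounds the multiplicity of the annuli by $\lesssim K_{j,m}R_m/R_j$ and (P4) bounds how many annulus-indices touch any single child $C_{I'}$ at scale $qm$. (3) Estimate the measure of $\bigcup_i D_{I,i}$ from below: since $D_{I,i}=B(E_{I,i},R)\setminus E_{I,i}$ with $R\sim R_j$ for the relevant scale and $C_I$ contains a ball of radius $\sim R_j$, each annulus has measure $\gtrsim R_j^{d-1}\cdot(\text{width})$, and there are $\sim K_{j,m}$ of them with widths summing to $\sim R_j$, while they are essentially nested/disjoint by the observation preceding (P5); this produces a term of size $\gtrsim R_j^d$, i.e. $\gtrsim P_j$ up to the perimeter bookkeeping. (4) Convert the additive "children + annuli" estimate into the power-$1/(d-1)$ (perimeter-exponent) form: using that $|C_I|^{(d-1)/d}$ behaves like a perimeter and concavity of $t\mapsto t^{(d-1)/d}$, derive $P_j^{1/(d-1)} \gtrsim Q_1 R_j + (\text{sum of children's }P_{j+1}^{1/(d-1)}\text{ contributions})$, which after dividing by the right powers of $2^{q/d}$ telescopes into $Q_jP_j^{1/(d-1)} - Q_{j+1}P_{j+1}^{1/(d-1)} \gtrsim Q_jR_j - (\text{error}_j)$. (5) Sum over $j=n,\dots,m-1$; the errors $\sum \text{error}_j$ are controlled by $C\sum_{j}2^{q(j-n)/d}K_{n,j}^{-1}$-type tails, which by \eqref{k1} are bounded uniformly and by \eqref{k2} are small when the scale gap is large, so for $m$ sufficiently large the total error is at most half of the main term, yielding \eqref{11}.

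The main obstacle I expect is step (4) together with the overlap bookkeeping in steps (2)--(3): one must pass from a purely additive volume inequality to the perimeter-exponent form $P^{1/(d-1)}$ in a way that (a) correctly aggregates the $\sim 2^q$ children (this is where the factor $Q_{j+1}=2^{q(j+1)/d}$ versus $Q_j$ must come out exactly right, forcing the $1/d$ in the exponent and explaining why $d\ge 2$ is needed), and (b) does not let the multiplicity errors from (P4)/(P5) — which scale like $C\cdot 2^{(m-n)/d}k_{n,m}^{-1}$ — overwhelm the gain $\gtrsim R_j^d$ from the annuli. Getting the constants to line up so that the error series is summable via \eqref{k1} and individually small via \eqref{k2} is the delicate point; everything else is isoperimetric inequality plus careful but routine estimation of measures of neighbourhoods.
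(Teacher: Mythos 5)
Your overall strategy (use the annuli $D_{I,j}$ and an isoperimetric argument to lower bound the volume lost between scale $qn$ and scale $qm$, then convert to the $Q_kP_k^{1/(d-1)}$ form) is the same general flavor as the paper's proof, but two essential ingredients are missing, and as written the plan would not go through. First, the volume bookkeeping in your steps (3)--(4) cannot be run with $|\cdot|^{(d-1)/d}$ and concavity. To estimate $|D_{I_0,j}\setminus\bigcup_{J\in\II_M}C_J|$ from below you need an inequality of the form \eqref{key}, $|B(S_0,r)\setminus\bigcup_j S_j|\ge rP(S_0)-r\sum_j P(S_j)$, and this is \emph{false} when $P$ is replaced by $|\cdot|^{(d-1)/d}$ (or by $P_0$): take $S_1$ to be the annulus $B(S_0,r)\setminus S_0$ itself; the left side is $0$ while the right side is positive for small $r$. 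The paper introduces the minimal perimeter $P(S)$ and proves Lemma \ref{peri} precisely to have a subadditive quantity for which \eqref{key} holds; your plan has no substitute for it. (There is also a dimensional slip in step (3): the annuli of one $C_I$ sweep volume, comparable to $R_j\cdot P(C_I)\gtrsim R_jP_j$, not ``$R_j^d$, i.e.\ $\gtrsim P_j$''.) A smaller omission: the starting estimate $T_{n+1}\gtrsim P_nQ_n^{-1}S_{n,n+1}$ requires the case split of the paper, i.e.\ handling $P_{n+1}\ll P_n$ separately and assuming \eqref{I_0} afterwards.

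Second, and more fundamentally, the consecutive-scale telescoping with ``errors summable via \eqref{k1}--\eqref{k2}, so take $m$ large enough'' does not work. The error produced when passing from scale $N$ to scale $M$ is $C(K_{N,M}r_M/r_N)^{-1}T_M$, where $T_M=R_M\sum_{J\in\II'_{qM}}P(C_J)$ is the \emph{total} perimeter sum at the finer scale. For $M=N+1$ the prefactor $(K_{N,M}r_M/r_N)^{-1}$ is only $\lesssim 1$, so the error is comparable to $T_{N+1}$, which is merely uniformly bounded (by (P1) and \eqref{rr}) and can be far larger than the coarse-scale main term $T_N$; it is not of size $2^{q(j-n)/d}K_{n,j}^{-1}\times(\text{main term})$, so \eqref{k1}--\eqref{k2} alone cannot absorb it, and no per-scale inequality $Q_jP_j^{1/(d-1)}-Q_{j+1}P_{j+1}^{1/(d-1)}\gtrsim Q_jR_j-\mathrm{error}_j$ with controlled errors is available. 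This is exactly why the paper proves the two-scale bound $|C_I\setminus\bigcup_{J\in\II_M}C_J|\gtrsim T_N-C(K_{N,M}r_M/r_N)^{-1}T_M$ for a pair of possibly far-apart scales and then runs the stopping-time selection of Subsection \ref{sub}: one builds the auxiliary subsequence $n_k$ with $a_{n_{k+1}}>L_{n_k,n_{k+1}}a_{n_k}$, shows $s_{n_k}\to\infty$ while $T_{n_k}\lesssim 1$, and takes the first index where $s_{n_k}>c_0T_{n_k}$ to get $N=n_{m-1}$, $M=n_m$ satisfying \eqref{3*}. In particular the $m$ in Lemma \ref{1} is produced by this non-constructive pigeonhole on the bounded sequence $\{T_m\}$, not by taking $m$ ``sufficiently large''; supplying this selection mechanism (or an equivalent) is the key missing idea in your proposal.
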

\begin{proof}[Finding a contradiction using Lemma \ref{1}] We take $n_1=1$ and using Lemma \ref{1} inductively choose a sequence $\{n_k\}_{k \in \mathbb{N}}$ such that $$ S_{n_k,n_{k+1}}\lesssim Q_{n_k}P_{n_{k+1}}^{1/(d-1)}-Q_{n_k}P_{n_{k+1}}^{1/(d-1)}.$$
Summing over all $k$, we have a telescopic sum on the right hand side. We have 
$$ \sum_{i=1}^{n_{k+1}}Q_iR_i \lesssim Q_1P_1^{1/(d-1)}.$$
Since the left hand side goes to infinity, we get a contradiction.
\end{proof}

Our aim is to prove Lemma \ref{1}. Fix $n$, and fix an $I\in\II_{qn}$ with $|C_I|=P_n^{d/(d-1)}$. Note that for $m=n+1$, the left hand side of $\eqref{11}$ is $Q_{n+1}R_{n+1}\le Q_{n+1}R_n\lesssim Q_nP_{n}^{1/(d-1)}$. Since also $Q_{n+1}\le Q_n$, therefore if $P_{n+1}\ll P_{n}$ then \eqref{11} holds with $m=n+1$. From now on, we assume that
\begin{equation}\label{I_0}
P_{n+1}\gtrsim P_{n}\quad\text{(for our fixed } n).
\end{equation}
Under this assumption, we will show a stronger statement than Lemma \ref{1}. Since $x^{d}-y^{d}\lesssim x^{d-1}(x-y)$ for every $x\ge y\ge 0$, we have
\begin{align*}
  |C_I\setminus \bigcup_{J\in\II_{qm}}C_J|&\le P_n^{d/(d-1)}-2^{q(m-n)}P_{m}^{d/(d-1)}\\
 &\lesssim P_n(P_n^{1/{(d-1)}}-2^{q(m-n)/d}P_m^{1/{(d-1)}})\\
&=P_nQ_n^{-1}(Q_nP_n^{1/(d-1)}-Q_mP_m^{1/(d-1)}).
\end{align*}

We will show that there is an $m>n$ s.t.
\begin{equation}\label{**}P_nQ_n^{-1}S_{n,m}\lesssim  |C_I\setminus \bigcup_{J\in\II_{qm}}C_J|.\end{equation}


\begin{remark}
  We will decompose the set in the right hand side of \eqref{**}, using the annuli described in the bounded growth sections. And then we will estimate its measure using a generalization of the isoperimetric inequality, which we describe in the next section.
\end{remark}
\subsection{Perimeters}

By the isoperimetric inequality,
$$|B(S,r)\setminus S|\gtrsim r|S|^{{(d-1)/d}}$$ for every set $S\subset\R^d$ and every $r>0$. A trivial corollary of this is that
$$|B(T,r)\setminus T|\gtrsim r|T|^{{(d-1)/d}}\ge  r|S|^{{(d-1)/d}}\quad\text{for every}\quad T\supset S.$$ As a refinement of this, we introduce in this section a sort of ‘minimal perimeter' of a bounded set $S$.

Let $S$ be an arbitrary bounded set, and let $u$ be the distance function from $S$. Then $u$ is Lipschitz with Lipschitz constant 1, hence it is differentiable almost everywhere, with $|\nabla u| \leq 1$. Moreover, it is easy to check that at every $x\not\in\cl(S)$, the directional derivative of $u$ is 1 along the line segment joining $x$ to its closest point $y\in\cl(S)$. Therefore $|\nabla u|=1$ at a.e. $x\not\in\cl(S)$, thus by the coarea formula we have
$$|B(S,r)\setminus S|\ge\int_{B(S,r)\setminus \cl(S)}|\nabla u|=\int_{0}^{r}\mathcal{H}^{d-1}(u^{-1}(t))\,dt=\int_{0}^{r}\HH^{d-1}(\partial B(S,t))\,dt.$$ Motivated by this, denote $$P_0(S):={\mathrm{ess}\,\inf}_{t>0}\HH^{d-1}(\partial B(S,t))$$ and $$P(S):=\inf_{T\supset S} P_0(T).$$
Then, clearly,
\begin{equation}\label{P0}
  P_0(S)=\inf_{r_1,r_2>0} |B(S,r_1+r_2)\setminus B(S,r_1)|/r_2,
\end{equation}
where, by the isoperimetric inequality,
$$|B(S,r_1+r_2)\setminus B(S,r_1)|/r_2\gtrsim |B(S,r_1)|^{{(d-1)/d}}\ge |S|^{{(d-1)/d}}.$$
Therefore $P_0(S)\gtrsim|S|^{{(d-1)/d}}$, and moreover, for every $T\supset S$, $P_0(T)\gtrsim|T|^{{(d-1)/d}}\ge |S|^{{(d-1)/d}}$. So
\begin{equation}
  \label{isop}
  P(S)\gtrsim |S|^{{(d-1)/d}}.
\end{equation}
By \eqref{P0}, for every $\eps>0$ there exist $T \supset S$ and $r_1, r_2>0$ such that
$$ |B(T,r_1+r_2)\setminus B(T,r_1)|/{r_2}\leq P_0(T)+\eps/2 \leq P(S)+\eps.$$
Taking $T'=B(T,r_1)$, we have that 
$ P(S) \leq |B(T',r_2)\setminus T'|/{r_2} \leq P(S)+\eps.$ Since $\eps$ is arbitrary, we have
\begin{equation}\label{tt}
   P(S)=\inf_{T\supset \cl(S),r>0} |B(T,r)\setminus T|/r,
 \end{equation}
 and for future reference let us note also its trivial corollary
  \begin{equation}\label{rr}
    |B(S,r)\setminus S|\ge  rP(S).
    \end{equation}
Further properties of $P(S)$ that we will be using are the following.

\begin{lemma}\label{peri} For every bounded sets $S_1,S_2\subset \mathbb{R}^d$ and $r >0$:
\begin{enumerate}
\item[(i)] $P(S_1 \cup S_2) \leq P(S_1) + P(S_2)$; 
\item[(ii)] $|B(S_1,r)\setminus (S_1 \cup S_2)|\ge rP(S_1)-rP(S_2)$.
  \end{enumerate}
\end{lemma}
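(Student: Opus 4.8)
The plan is to deduce both parts from the characterization \eqref{tt} of $P(S)$ as an infimum of "shell ratios" $|B(T,r)\setminus T|/r$ over closed supersets $T\supset\cl(S)$ and radii $r>0$, together with the trivial monotonicity and additivity of Lebesgue measure. For part (i), I would argue as follows. Fix $\eps>0$. By \eqref{tt} choose closed sets $T_i\supset\cl(S_i)$ and radii $\rho_i>0$ with $|B(T_i,\rho_i)\setminus T_i|/\rho_i\le P(S_i)+\eps$ for $i=1,2$. The obstacle here is that the two optimizing radii $\rho_1,\rho_2$ need not be equal, so $T_1\cup T_2$ does not immediately come with a single good shell. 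I would get around this by first observing that the shell ratio is "scale-free enough": from \eqref{P0} and the coarea computation preceding it, $|B(T,\rho)\setminus T|/\rho\ge \operatorname{ess\,inf}_{t}\HH^{d-1}(\partial B(T,t))=P_0(T)\ge P(S)$ always, and conversely one can make $\rho$ as small as one likes while keeping the ratio within $\eps$ of $P_0(T)$ (just take $\rho\to 0$ along a sequence of near-minimizing levels $t$ of $\HH^{d-1}(\partial B(T,t))$, using that the ratio $|B(T,t)\setminus B(T,t-\delta)|/\delta\to \HH^{d-1}(\partial B(T,t))$ for a.e. $t$). So without loss of generality $\rho_1=\rho_2=:\rho$. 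Then
\[
B(T_1\cup T_2,\rho)\setminus(T_1\cup T_2)\subset \bigl(B(T_1,\rho)\setminus T_1\bigr)\cup\bigl(B(T_2,\rho)\setminus T_2\bigr),
\]
since any point within distance $\rho$ of $T_1\cup T_2$ is within distance $\rho$ of $T_1$ or of $T_2$, and if it lies outside $T_1\cup T_2$ it lies outside whichever one it is close to. Taking measures and using $T_1\cup T_2\supset\cl(S_1\cup S_2)$ in \eqref{tt},
\[
P(S_1\cup S_2)\le \frac{|B(T_1\cup T_2,\rho)\setminus(T_1\cup T_2)|}{\rho}\le \frac{|B(T_1,\rho)\setminus T_1|}{\rho}+\frac{|B(T_2,\rho)\setminus T_2|}{\rho}\le P(S_1)+P(S_2)+2\eps,
\]
and letting $\eps\to0$ gives (i).

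For part (ii), I would start from the set inclusion
\[
B(S_1,r)\setminus(S_1\cup S_2)\;\supseteq\;\bigl(B(S_1,r)\setminus S_1\bigr)\setminus S_2,
\]
so that $|B(S_1,r)\setminus(S_1\cup S_2)|\ge |B(S_1,r)\setminus S_1|-|S_2|$; but $|S_2|$ is the wrong quantity, we want $rP(S_2)$. The fix is to apply this not to $S_2$ itself but to a thin shell around $S_2$ and then let the shell shrink, or more cleanly: note $B(S_1,r)\setminus(S_1\cup S_2)\supseteq \bigl(B(S_1,r)\setminus S_1\bigr)\setminus \bigl(B(S_2,r)\setminus S_2\bigr)$ is \emph{not} quite what is needed either. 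Instead I would use \eqref{rr}: $|B(S_1,r)\setminus S_1|\ge rP(S_1)$. Then it suffices to show $|S_2\cap (B(S_1,r)\setminus S_1)|\le rP(S_2)$, i.e. that the part of the shell $B(S_1,r)\setminus S_1$ covered by $S_2$ has measure at most $rP(S_2)$. Here I would slice by the distance function $u_1=\dist(\cdot,S_1)$: on $B(S_1,r)\setminus\cl(S_1)$ we have $|\nabla u_1|=1$ a.e., so by the coarea formula
\[
|S_2\cap(B(S_1,r)\setminus S_1)|=\int_0^r \HH^{d-1}\bigl(S_2\cap\partial B(S_1,t)\bigr)\,dt,
\]
and I want to bound the integrand by $P(S_2)$ — but this is false pointwise in general (a slice of $S_2$ can be large). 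The correct route, and what I expect to be the main obstacle, is to avoid slicing and instead argue directly: write $B(S_1,r)\setminus(S_1\cup S_2)=\bigl(B(S_1,r)\setminus S_1\bigr)\cap S_2^c$ and compare with $B(S_1\cup S_2,r)\setminus(S_1\cup S_2)$. Using $B(S_1\cup S_2,r)\subset B(S_1,r)\cup B(S_2,r)$ together with part (i)'s style of shell decomposition, one gets $|B(S_1,r)\setminus(S_1\cup S_2)|\ge |B(S_1\cup S_2,r)\setminus(S_1\cup S_2)|-|B(S_2,r)\setminus S_2|\ge rP(S_1\cup S_2)-|B(S_2,r)\setminus S_2|$, which still has the wrong last term.

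The cleanest correct argument, which I would ultimately adopt, bounds things the other way: by \eqref{rr} applied to $S_1\cup S_2$ we do not gain, so instead I use that for \emph{any} closed $T\supset\cl(S_2)$ and any $r>0$,
\[
B(S_1,r)\setminus(S_1\cup S_2)\supseteq \bigl(B(S_1,r)\setminus S_1\bigr)\setminus T \supseteq \bigl(B(S_1,r)\setminus S_1\bigr)\setminus B(S_2,r)
\]
only when $T\subset B(S_2,r)$, which need not hold. Given the delicacy, in the write-up I would instead prove (ii) by first establishing it in the special case where $S_1,S_2$ are \emph{disjoint closed} sets with $\dist(S_1,S_2)>r$ (trivial, both sides' subtracted term vanishes) and bootstrap, or — most likely — simply deduce (ii) from (i) via: $|B(S_1,r)\setminus(S_1\cup S_2)|\ge |B(S_1\cup S_2,r)\setminus(S_1\cup S_2)| - |B(S_2,r)\setminus(S_1\cup S_2)| \ge rP(S_1\cup S_2)-|B(S_2,r)\setminus S_2|$ and then, since only a lower bound in terms of $P(S_1)$ and $P(S_2)$ is needed and $|B(S_2,r)\setminus S_2|$ can be replaced by $rP(S_2)$ \emph{whenever the shell around $S_2$ is a near-minimizer} (which, via \eqref{tt}, we may arrange by replacing $S_2$ with the superset $T_2$ realizing $P$), one arrives at $|B(S_1,r)\setminus(S_1\cup S_2)|\gtrsim rP(S_1\cup S_2)-rP(S_2)\ge rP(S_1)-rP(S_2)$ after absorbing $\eps$ — where the last inequality is \emph{false} in general, so the genuinely correct statement must use $P(S_1\cup S_2)\ge P(S_1)$, which holds since any $T\supset\cl(S_1\cup S_2)$ is also a superset of $\cl(S_1)$. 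I therefore expect the main difficulty to be pinning down exactly which shell radius to use around $S_2$; once one commits to extracting a near-optimal superset $T_2\supset\cl(S_2)$ with a common radius as in part (i), the inequality $|B(S_1,r)\setminus(S_1\cup S_2)|\ge |B(S_1,r)\setminus S_1| - |B(T_2,r)\setminus T_2| \ge rP(S_1)-rP(S_2)-\eps r$ follows from the inclusion $S_2\cap(B(S_1,r)\setminus S_1)\subset T_2\cap B(S_1,r)\subset B(T_2,r)\setminus T_2$ is again not literally valid, so the honest proof slices $S_2$'s contribution against $u_1$ and controls it by $P_0$ of appropriate superlevel sets; I would carry out that coarea estimate carefully in the final text.
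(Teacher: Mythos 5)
Your argument for part (i) is fine: replacing the optimizing supersets by slightly enlarged ones so that a single small shell width works for both (via a Lebesgue point of $t\mapsto|B(T,t)|$), and then using $B(T_1\cup T_2,\rho)\setminus(T_1\cup T_2)\subset(B(T_1,\rho)\setminus T_1)\cup(B(T_2,\rho)\setminus T_2)$, is a legitimate variant of the argument in the paper (which instead aligns the two radii by passing to $B(T_1,r_1-r_2)\cup T_2$).

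For part (ii), however, you do not actually have a proof, and the route you finally settle on does not close. Your fallback is $|B(S_1,r)\setminus(S_1\cup S_2)|\ge rP(S_1\cup S_2)-|B(T_2,r)\setminus T_2|$ with $T_2\supset\cl(S_2)$ a near-minimizer from \eqref{tt}, hoping to replace $|B(T_2,r)\setminus T_2|$ by $r(P(S_2)+\eps)$. But \eqref{tt} only guarantees a good shell ratio at \emph{some} radius (which you cannot prescribe, and which typically must be small), whereas the lemma is claimed for an arbitrary given $r$; for that $r$ the bound $|B(T_2,r)\setminus T_2|\le r(P(S_2)+\eps)$ is simply false in general -- e.g.\ if $S_2$ is a single point then $P(S_2)=0$ while every superset has $r$-shell of measure $\gtrsim r^d$. (Incidentally, the inequality you flag as ``false in general'', $P(S_1\cup S_2)\ge P(S_1)$, is true by monotonicity of $P$; the real obstruction is the subtracted shell term.) Your earlier slicing attempt was correctly abandoned: $\HH^{d-1}(S_2\cap\partial B(S_1,t))\le P(S_2)$ can fail badly pointwise. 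The missing idea, which is how the paper proceeds, is to bound from below the part of the level surface \emph{outside} $S_2$, in an averaged sense, uniformly in small scales: choose $T'\supset S_2$ nearly optimal, pass to $T_2:=B(T',t')$ for a suitable Lebesgue density point $t'$ so that $\frac1r\int_0^r\HH^{d-1}(\partial B(T_2,t))\,dt\le P(S_2)+\eps$ for \emph{all} sufficiently small $r$; then for any $T\supset S_1$ use $P(S_1)\le P(T\cup T_2)\le\frac1r\int_0^r\bigl(\HH^{d-1}(\partial B(T,t)\setminus T_2)+\HH^{d-1}(\partial B(T_2,t))\bigr)dt$, take $T=B(S_1,t_0)$ and let $t_0\to0$, $r\to0$ to conclude $\operatorname{ess\,inf}_{t>0}\HH^{d-1}(\partial B(S_1,t)\setminus\cl(S_2))\ge P(S_1)-P(S_2)-\eps$, and only then integrate this essential-infimum bound by the coarea formula over $(0,r)$ for the originally given $r$. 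Without this two-step reduction (uniform small-scale average for $T_2$, then an ess-inf statement valid at every level $t$), the dependence on the prescribed $r$ cannot be recovered, so as written part (ii) has a genuine gap.
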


Using induction, an immediate corollary is: for every bounded sets $S_j\subset \mathbb{R}^d$ and $r >0$:
\begin{equation}\label{key}
  |B(S_0,r)\setminus\bigcup_{j=0}^n S_n|\ge rP(S_0)-r\sum_{j=1}^nP(S_j).
\end{equation}

\begin{remark} The key properties we will need for the proof of our main theorem are \eqref{isop}, \eqref{rr} and \eqref{key}.
The reason for considering $P(S)$ instead of the simpler notions $|S|^{(d-1)/d}$ or $P_0(S)$ is that Lemma \ref{peri}, and hence also \eqref{key} do not hold with $P$ replaced by $|S|^{(d-1)/d}$ or $P_0$.
\end{remark}
  
\begin{proof}[Proof of Lemma \ref{peri}]
%
Property (i) follows easily from the observation that if $T_1\supset S_1$, $T_2\supset S_2$, and (say) $r_1\ge r_2$, then with $T:=B(T_1,r_1-r_2)\cup T_2$,
\begin{equation}\label{t01}
  \HH^{d-1}(\partial B(T,r_2))\le \HH^{d-1} (\partial B(T_1,r_1))+\HH^{d-1}(\partial B(T_2,r_2)).
  \end{equation} Indeed, for each $\eps>0$ there are positively many $r_1$ and positively many $r_2$ s.t.
\begin{equation}\label{t12}
\HH^{d-1}(\partial B(T_1,r_1))\le P_0(T_1)+\eps,\quad\HH^{d-1}(\partial B(T_2,r_2))\le P_0(T_2)+\eps,
\end{equation}
and without loss of generality we can assume that there are positively many $r_1,r_2$ satisfying \eqref{t12} for which $r_1\ge r_2$. Then we can fix an $r>0$ for which there are positively many $r_1,r_2$ satisfying  \eqref{t12} with $r=r_1-r_2$. With $T=B(T_1,r)\cup T_2$, from \eqref{t01} and \eqref{t12} we get $P(S_1\cup S_2)\le P_0(T)\le P_0(T_1)+ P_0(T_2)+2\eps$. Since this holds for every $T_1\supset S_1$, $T_2\supset S_2$ and $\eps>0$, therefore (i) follows.

In order to prove (ii), again fix an $\eps>0$. Then by \eqref{tt} and by the co-area formula there is a $T'\supset S_2$ and $r>0$ s.t. $$\frac{1}{r}\int_0^r\HH^{d-1}(\partial B(T',t))\,dt=\frac{1}{r}|B(T',r)\setminus \cl(T')|<P(S_2)+\eps/2.$$
Let $t'\in(0,r)$ to be a Lebesgue density point of $t\to \HH^{d-1}(\partial B(T',t))$ for which $\HH^{d-1}(\partial B(T',t'))<\eps/2+{\mathrm{ess}}\inf_{t\in(0,r)}\HH^{d-1}(\partial B(T',t))$. Then with $T_2:=B(T',t')$  and an $r_{\eps}$ small enough we have 
$$ \frac{1}{r}\int_0^r\HH^{d-1}(\partial B(T_2,t))\,dt<P(S_2)+\eps \text{ for all }r<r_{\eps}.$$
Then for any $T \supset S_1$ and $r<r_{\eps}$, we have
\begin{align*}
P(T)\leq P(T\cup T_2) & \leq \frac{1}{r}\int_{0}^{r}\HH^{d-1}(\partial B(T \cup T_2,t))\,dt \\
    & \leq \frac{1}{r}\left(\int_{0}^{r} \HH^{d-1}(\partial B(T,t)\setminus T_2)\,dt
   + \int_{0}^{r} \HH^{d-1}(\partial B(T_2,t))\,dt\right)\\
   & \le \frac{1}{r}\int_{0}^{r} \HH^{d-1}(\partial B(T,t)\setminus T_2)\,dt + P(S_2)+\eps.
\end{align*}
Since $T_2\supset \cl(S_2)$ we have
\begin{align*}
P(S_1)-P(S_2)-\eps&\le P(T)-P(S_2)-\eps \leq \frac{1}{r}\int_{0}^{r} \HH^{d-1}(\partial B(T,t)\setminus T_2)\,dt\\
  &\le \frac{1}{r}\int_{0}^{r} \HH^{d-1}(\partial B(T,t)\setminus \cl(S_2))\,dt
\end{align*}
for all $r< r_{\eps}$.
This holds for every $T \supset S_1$, in particular it holds for $T=B(S_1,t_0)$ with every $t_0> 0$. Taking infimum for all $t_0>0$ and all $r<r_\eps$ we can see that $$ P(S_1)-P(S_2)-\eps\le  {{\mathrm{ess}}\inf}_{t>0}\HH^{d-1}(\partial B(S_1,t)\setminus \cl(S_2)).$$

Applying the co-area formula for the distance function from $S_1$, on the domain $B(S_1,r)\setminus (\cl(S_1)\cup \cl(S_2))$, we obtain 
\begin{align*}
    r(P(S_1)-P(S_2)-\eps) & \leq \int_{0}^{r}\HH^{d-1}(\partial B(S_1,t)\setminus \cl(S_2))\,dt \\
    & = |B(S_1,r)\setminus(\cl(S_1)\cup \cl (S_2))|\le  |B(S_1,r)\setminus(S_1\cup S_2)|.
\end{align*}
Since this holds for every $\eps>0$, the proof is finished.
\end{proof}

\subsection{Estimating the right hand side of \eqref{**}}

Denote $$T_m:=R_m\sum_{J\in\II_{qm}'}P(C_J).$$ By \eqref{rr} this is a lower estimate for $\sum|B(C_J,R_m)\setminus C_J|$, and by (P\ref{CI}) the sets $B(C_J,R_m)$ have bounded overlap.

Therefore $T_m\lesssim 1$ for every $m$. On the other hand, for $n+1$ we have from \eqref{I_0} the lower estimate
\begin{equation*}
  T_{n+1}\gtrsim R_{n+1}\sum_{J\in\II_{q(n+1)}'}|C_J|^{(d-1)/d}\ge R_{n+1}P_{n+1}\gtrsim R_{n+1}P_{n}\gtrsim P_nQ_n^{-1}S_{n,n+1}.
\end{equation*}
So \eqref{**} would hold, with $m=n+1$, if we could replace its right hand side by $T_{n+1}$.

\begin{remark} In the Cantor example we can indeed replace it by $T_{n+1}$. Which makes everything much easier. However, in the Sard, and in the general case, we cannot, we will prove only a (somewhat) weaker lower bound, with an error term. See \eqref{3*} below. In the next section our goal will be to make the error term small enough (compared to the main term). 
\end{remark}

  Note that in all the above formulas we sum only for $\II_{qm}'$ and not for  $\II_{qm}$. We apply in this section the results of Section \ref{section5} for the interval $I$ we fixed. 

%
%
  

By (P\ref{5}) and (P\ref{6}), for each $M>N>n$ we have
$$\frac{r_{M}}{r_N}\cdot K_{N,M}|C_{I}\setminus\bigcup_{J\in \II_M} C_{J}|\gtrsim\sum_{\substack{I_0\in\II_N'\\ 0\le j< K_{N,M}}}|D_{I_0,j}\setminus\bigcup_{J\in \II_M} C_{J}|,$$
and by \eqref{key}
$$|D_{I_0,j}\setminus\bigcup_{J\in \II_M} C_{J}|\ge r_{M}P(E_{I_0,j})-r_{M}\sum_{\substack{J\in \II_M\\ C_{J}\cap D_{I_0,j}\neq\emp}}P(C_{J}).$$
By (P\ref{3}), we have $P(E_{I_0,j})\ge P(C_{I_0})$, and by (P\ref{4}) and (P\ref{7}),
$$\sum_{\substack{I_0\in\II_N\\0\le j< K_{N,M}}}\sum_{\substack{J\in \II_M\\ C_J\cap D_{I_0,j}\neq\emp}}P(C_J)\le C\sum_{\substack{J\in \II_M\\ \exists (I_0,j)\colon C_J\cap D_{I_0,j}\neq\emp}}P(C_{J})\le C\sum_{J\in\II_M'}P(C_{J}).$$
where $C$ is the absolute constant in (P\ref{4}).

Putting these estimates together we get
\begin{align*}
  |C_{I}\setminus\bigcup_{J\in \II_M} C_{J}|&\gtrsim r_N(\sum_{J\in\II_N'}P(C_J)-CK_{N,M}^{-1}\sum_{J\in\II_M'}P(C_J))\\
  &=T_N-C(K_{N,M}r_M/r_N)^{-1}T_M.
\end{align*}

Recall that our aim is to show that \eqref{**} holds for some (carefully chosen) $m$. We will prove this by showing that there is $n<N<M$ s.t.
\begin{equation}\label{3*}
  P_nQ_n^{-1}S_{n,M}\lesssim T_N-C(K_{N,M}r_M/r_N)^{-1}T_M.
  \end{equation}
We have already seen that
\begin{equation}\label{4*}P_nQ_n^{-1}S_{n,n+1}\lesssim T_{n+1}.
  \end{equation}

  \subsection{Finding $N$ and $M$}\label{sub}

To further simplify notation, for every $m\ge n$ denote
$$a_{m}:=P_nQ_n^{-1}Q_mR_m.$$ Using this notation, $a_{m+1}\lesssim a_m$ for every $m$, $\sum_m a_m=\infty$, 
and our aim is to show that
\begin{equation}\label{maine}
  \sum_{k=n+1}^M a_k\lesssim T_N-CK_{N,M}^{-1}2^{q(M-N)/d}T_Ma_N/a_M
  \end{equation}
for some $n<N<M$.
For simplicity, also denote $$L_{N,M}:=2CK_{N,M}^{-1}2^{q(M-N)/d}$$ for every $n<N<M$. We know from \eqref{k1} and \eqref{k2} that  $\sum_M L_{N,M}\lesssim 1$ for every $N$, and  $L_{N,M}\le 1/2$ for every $N,M$.

We have $\limsup_{M\to\infty} L_{N,M}^{-1}a_M=\infty$ (otherwise $\infty=\sum_M a_M\lesssim \sum_M L_{N,M}<\infty$, which is a contradiction). Therefore, inductively, we can choose a subsequence, starting from $n_0=n+1$, and then by choosing, after each $a_{n_k}$, the first $a_{n_{k+1}}$ for which
$$a_{n_{k+1}}>L_{n_{k},n_{k+1}}a_{n_k}.$$ 

Since $a_l\le L_{n_{k},l}a_{n_k}< a_{n_k}$ for $n_k< l<n_{k+1}$, and $a_{n_{k+1}}\lesssim a_{n_{{k+1}}-1}$, therefore
\begin{equation}\label{ineq}a_{n_{k+1}}\lesssim a_{n_k}.
  \end{equation}
  Also
$$\sum_{l=n_k}^{n_{k+1}-1}a_l=a_{n_k}+\sum_{l=n_k+1}^{n_{k+1}-1}a_l\le a_{n_k}+(\sum_{l=n_k+1}^{n_{k+1}-1}L_{n_k,l})a_{n_k}\sim a_{n_k},$$ so
  \begin{equation}\label{ineq2}
    \sum_{l=n_k}^{n_{k+1}-1}a_l\sim a_{n_k}.
    \end{equation}
    Inequalities \eqref{ineq} and \eqref{ineq2}, together with
    \begin{align*}
      a_{n_m}/a_{n_{k-1}}&=(a_{n_m}/a_{n_{m+1}})\cdot(a_{n_{m+1}}/a_{n_{m+2}})\cdots(a_{n_{k-2}}/a_{n_{k-1}})\\      
&\lesssim (L_{n_m,n_{m+1}}L_{n_{m+1}1,n_{m+2}}\dots L_{n_{k-2},n_{k-1}} )^{-1}\\
                           \end{align*} give
      \begin{align*}\sum_{l=n_0}^{n_{k}}a_l&\lesssim\sum_{m=0}^{k} a_{n_m}\lesssim\sum_{m=0}^{k-1} a_{n_m}= a_{n_{k-1}}\sum_{m=0}^{k-1} a_{n_m}/a_{n_{k-1}}\\
 &\lesssim a_{n_{k-1}}\sum_{m=0}^{k-1}(L_{n_m,n_{m+1}}L_{n_{m+1},n_{m+2}}\dots L_{n_{k-2},n_{k-1}} )^{-1}.
\end{align*}
By our assumption $L_{N,M}\le 1/2$, in the last sum each term is at most half of the previous term. Therefore we can estimate the sum by its first term, and we get
$$\sum_{l=n_0}^{n_{k}}a_l\lesssim a_{n_{k-1}}(L_{n_0,n_1}L_{n_1,n_2}\dots L_{n_{k-2},n_{k-1}} )^{-1}:=s_{n_{k-1}}.$$

Since the left hand side of this converges to infinity, therefore $s_{n_k}\to\infty$ as $k\to\infty$.



We defined $s_{n_{m-1}}$ as an upper bound of the left hand side of $\eqref{maine}$ (when we replace $M$ by $n_m$ in $\eqref{maine}$). We will prove $\eqref{maine}$ by showing that there is an $m\ge 1$ s.t. with $N=n_{m-1}$, $M=n_m$,  
\begin{equation}\label{12}
  s_{N}\lesssim T_N-2^{-1}L_{N,M}T_Ma_N/a_M.
  \end{equation}
  Using \eqref{4*} we have $s_{n_0}=a_{n_0}\lesssim T_{n_0}$. The fact that $s_{n_k}$ goes to infinity and $T_{n_k}$ is bounded allows us to choose $m$ to be the first index for which this inequality fails. That is, with some absolute constant $c_0$ and some $m>0$, $N=n_{m-1}$, $M=n_m$ we have $s_N\le c_0 T_N$ and $s_M>c_0T_M$. In particular, $T_M/T_N<s_M/s_N=L_{N,M}^{-1}a_M/a_N$, i.e. $L_{N,M}T_Ma_N/a_M<T_N$. Using this, the right hand side of \eqref{12} is
  $$T_N-2^{-1}L_{N,M}T_Ma_N/a_M>T_N/2.$$
  This together with $s_N\le c_0 T_N$ finishes the proof of \eqref{12} and hence finishes the proof of Lemma \ref{1}.

  \section{Construction}\label{construction}
  Let $\{v_n\}$ be an arbitrary sequence decreasing to 0, and assume that $\sum_n2^{n/d}v_n<\infty$. We split it into $d$ subsequences $\{v_{dn+m}\}_{n=1}^\infty$, $m=0,1,\dots,d-1$. For each $m$, take a Cantor set lying on the $m^{th}$ coordinate axes of $\R^d$, whose gaps have length $v_{dn+m}$ in the $n^{th}$ stage of the standard Cantor set construction (since it has $2^{n-1}$ gaps of length $v_{dn+m}$, and $\sum_n 2^{n-1}v_{dn+m}<\infty$, therefore such a Cantor set exists).

  Now consider a finite sequence of $0's$ and $1's$, say, $I=x_1x_2\dots x_N$. We can split this into $d$ subsequences, by putting into the $m^{th}$ subsequence those $x_j$ for which $j\equiv m\,(\textrm{mod } d)$. For the $m^{th}$ subsequence, let $I_m$ denote the corresponding construction interval of the Cantor set on the $m^{th}$ coordinate axis. And let $R_I=R_{x_1x_2\dots x_N}$ denote the rectangle $R_I=I_0\times I_1\times\dots\times I_{m-1}$. It is immediate to see that if we have another sequence $I'=x_1'x_2'\dots x_N'$, and $x_k\neq x_k'$ for some $1\le k\le N$, then the distance of $R_I$ and $R_{I'}$ is at least $v_k$: indeed, if $k\equiv m\,(\textrm{mod }d)$, then the sets $R_I,R_{I'}$ project onto two different intervals in the $m^{th}$ direction and these two intervals are separated by a gap of length at least $v_k$.
  
  \begin{proof}[Proof of Theorem \ref{chara}] Put $v_n:=r_n^*$ and $A_I:=R_I$ in the above construction.
  \end{proof}

  \begin{proof}[Proof of (II)$\implies$(I) in Proposition \ref{propo}] Let $\{r_n\}$ be the sequence in (II) of Proposition \ref{propo}. Consider the Cantor case, with this sequence $\{r_n\}$. By Theorem \ref{chara} in this case we have a positive answer to Problem \ref{main}. And since in the Cantor case we have bounded growth, therefore we know that there are sets satisfying (P1)-(P7) with some $C,q,k_{n,m}$. Moreover, in the Cantor case we can choose $C$ to be anything we like (since in (P4), instead of 'constant many pairs $(I,j)$', there are none), we can also choose $q$ to be anything we like (since we have seen that (P6)-(P7) hold with $q=1$), and we have also seen that we can choose $k_{n,m}$ to be arbitrary. This shows that indeed for any given $C,q,k_{n,m}$ there are sets satisfying (P1)-(P7), i.e. (I) of Proposition \ref{propo} holds.
\end{proof}
\bibliographystyle{plain}
\bibliography{bibliography}

\begin{thebibliography}{10}

\bibitem{Alberti}
Alberti.
\newblock Generalized $n$-property and sard theorem for sobolev maps.
\newblock {\em Atti Accad. Naz. Lincei Cl. Sci. Fis. Mat. Natur.}, 23(4):477–491, 2012.

\bibitem{Bates}
Bates.
\newblock Toward a precise smoothness hypothesis in sard's theorem.
\newblock {\em Proceedings of the American Mathematical Society}, 117(1):279--283, 1993.

\bibitem{Bourgain}
Bourgain, Korobkov, and Kristensen.
\newblock On the morse-sard property and level sets of sobolev and bv functions.
\newblock {\em Rev. Mat. Iberoam.}, 29(1):1--23, 2013.

\bibitem{Bourgain1}
Bourgain, Korobkov, and Kristensen.
\newblock On the morse-sard property and level sets of $w^{n,1}$ sobolev functions on $\mathbb{R}^n$.
\newblock {\em J. Reine Angew. Math.}, 700:93--112, 2015.

\bibitem{Choquet}
Choquet.
\newblock L'isometrie des ensembles dans ses rapports avec la theorie du contact et la theorie de la mesure.
\newblock {\em Mathematica (Timisoara)}, 20:29--64, 1944.

\bibitem{Marianna}
Csörnyei.
\newblock Sard's theorem revisited.
\newblock {\em Real Analysis Exchange, suppl. Summer Symposium Conference}, 33, 2007.

\bibitem{Dubo}
Dubovitskiĭ.
\newblock On the structure of level sets of differentiable mappings of an $n$-dimensional cube into a $k$-dimensional cube.
\newblock {\em Izv. Akad. Nauk SSSR. Ser. Mat.}, 21:371--408, 1957.

\bibitem{Federer}
Federer.
\newblock Geometric measure theory.
\newblock {\em Springer-Verlag, New York; Heidelberg; Berlin}, 1969.

\bibitem{Ferone}
Ferone, Korobkov, and Roviello.
\newblock The morse–sard theorem and luzin $n$-property: A new synthesis for smooth and sobolev mappings.
\newblock {\em Siberian Mathematical Journal}, 60:916–926, 2019.

\bibitem{Figalli}
Figalli.
\newblock A simple proof of the morse–sard theorem in sobolev spaces.
\newblock {\em Proc. Amer. Math. Soc.}, 136(10):3675–3681, 2008.

\bibitem{Morse}
Morse.
\newblock The behavior of a function on its critical set.
\newblock {\em Annals of Mathematics}, 40(1):62--70, 1939.

\bibitem{Norton}
Norton.
\newblock The zygmund morse-sard theorem.
\newblock {\em The Journal of Geometric Analysis}, 4:403–424, 1994.

\bibitem{Pasc}
Pascale.
\newblock The morse-sard theorem in sobolev spaces.
\newblock {\em Indiana University Mathematics Journal}, 50(3):1371--1387, 2001.

\bibitem{Pavlica}
Pavlica and Zajíček.
\newblock Morse-sard theorem for d.c. functions and mappings on $\mathbb{R}^2$.
\newblock {\em Indiana University Mathematics Journal}, 55(3):1195--1207, 2006.

\bibitem{Putten1}
Putten.
\newblock The morse-sard theorem in $w^{n,n}(\omega)$: a simple proof.
\newblock {\em Bull. Sci. Math.}, 136:477–483, 2012.

\bibitem{Sard0}
Sard.
\newblock The measure of the critical values of differentiable maps.
\newblock {\em Bull. Am. Math. Soc.}, 48:883--890, 1942.

\bibitem{Sard1}
Sard.
\newblock Hausdorff measure of critical images on banach manifolds.
\newblock {\em American Journal of Mathematics}, 87(1):158--174, 1965.

\bibitem{Smale}
Smale.
\newblock An infinite dimensional version of sard's theorem.
\newblock {\em American Journal of Mathematics}, 87(4):861--866, 1965.

\end{thebibliography}
\Addresses
\end{document}